       \newtheorem{thm}{Theorem}[section] %%the counter [section] is optional  
       \newtheorem{cor}[thm]{Corollary}    
       \newtheorem{lem}[thm]{Lemma}        
       \theoremstyle{definition} %%%%% switch to Roman bodyfont
       \newtheorem{rem}[thm]{Remark}   %%%%%the counter [thm] is optional here 
\newcommand{\supp}{\operatorname{supp}}
\newcommand{\rp}{ \mathbb R_+}
\newcommand{\crp}{\overline{\mathbb R}_+}
\newcommand{\rn}{{\mathbb R}^n}
\newcommand{\rnp}{{\mathbb R}^n_+}
\newcommand{\rnpm}{\mathbb R^n_\pm}
\newcommand{\crnp}{\overline{\mathbb R}^n_+}
\newcommand{\crnpm}{\overline{\mathbb R}^n_\pm}
\newcommand{\comega}{\overline\Omega }
\newcommand{\ang}[1]{\langle {#1} \rangle}
\newcommand{\Op}{\operatorname{Op}}
\newcommand{\simto}{\overset\sim\rightarrow}
\newcommand{\ol}{\overline}
\newcommand{\R}{\mathbb R}
\newcommand{\SD}{\mathcal S}
\newcommand{\F}{\mathcal F}
\newcommand{\D}{\mathcal D}
\begin{document}

\title [Exact Green's formula for the fractional Laplacian]
{Exact Green's formula for the fractional Laplacian  and perturbations}

\author {Gerd Grubb}

\address
{Dept. of Mathematical Sciences,\\ Copenhagen University,\\
Universitetsparken 5,\\ DK-2100 Copenhagen, Denmark.\\
E-mail {\tt grubb\@math.ku.dk}}
\begin{abstract}
Let $\Omega $ be an open, smooth, bounded subset of $ \rn$. In connection with
the fractional
Laplacian $(-\Delta )^a$ ($a>0$), and more generally for a $2a$-order
classical pseudodifferential operator  ($\psi $do) $P$ with even
symbol, one can define the Dirichlet value $\gamma _0^{a-1}u$ resp.\
Neumann value $\gamma _1^{a-1}u$ of $u(x)$ as the trace resp.\ normal
derivative of $u/d^{a-1}$ on $\partial\Omega $, where $d(x)$ is the
distance from $x\in\Omega $ to $\partial\Omega $; they define
well-posed boundary value problems for $P$. 

A Green's formula was
shown in a preceding paper, containing a generally nonlocal term
$(B\gamma _0^{a-1}u,\gamma _0^{a-1}v)_{\partial\Omega }$, where $B$ is a 
first-order $\psi $do
on $\partial\Omega $.
Presently, we determine $B$ from $L$ in the case $P=L^a$, where   $L$ is  a strongly elliptic
second-order differential operator. A particular result is that $B=0$ when
$L=-\Delta $, and that $B$ is multiplication by a function (is local)
when $L$ equals $-\Delta $ plus a first-order term. In cases of more general
$L$,  $B$ can be nonlocal.
\end{abstract}

\subjclass{35J05; 35J40; 35S15; 47G30}

\maketitle

\section { Introduction}\label{sec1}

The fractional Laplacian $(-\Delta )^a$ on ${\mathbb R}^n$, $a>0$, is
 currently
receiving much attention because of its great interest for applications in
both probability, finance, mathematical physics and differential
geometry. (References to many important contributions through the
years are given e.g.\ in our preceding papers \cite{G14}--\cite{G16b}.)
$(-\Delta )^a$ can be defined as a pseudodifferential operator ($\psi $do), or equivalently
 as a singular integral operator:
\begin{align}\label{eq:1.1}
(-\Delta )^au&=\operatorname{Op}(|\xi |^{2a})u=
\mathcal F^{-1}(|\xi |^{2a}\hat u(\xi )),\\ \nonumber
(-\Delta )^au(x)&=c_{n,a}PV\int_{{\mathbb
R}^n}\frac{u(x)-u(y)}{|x-y|^{n+2a}}
\,dy,
\end{align}
where $\mathcal F$ denotes Fourier transformation $\hat
u(\xi )=\mathcal F
u= \int_{{\mathbb R}^n}e^{-ix\cdot \xi }u(x)\, dx$.
Since the operator is nonlocal for noninteger $a$, it is not obvious
how to define its action over a subset $\Omega $ of ${\mathbb
R}^n$, and there are several ways to define operators on $\Omega $
representing homogeneous boundary value problems for it (see e.g.\ the
overview
in Section 6 of \cite{G16a}).

A much studied case is the so-called restricted Dirichlet problem
\begin{equation}\label{eq:1.2}
%r^+(-\Delta )^au\equiv 
  ((-\Delta )^au)|_{\Omega }=f,\quad \supp u\subset\comega,
  \end{equation}
considered for functions $u$ and $f$ with a certain regularity. 

One can also impose {\emph nonhomogeneous} boundary conditions. We
are particularly interested in {\emph local} boundary operators (i.e,\
operators that
can be defined pointwise at $\partial\Omega $). It was shown in
\cite{G15}, Sect.\ 5, for smooth open sets $\Omega $, that the local  operators \begin{equation}
\gamma _j^{a+k}u=c_{akj}\gamma _j(u/d^{a+k}), \; j\in{\mathbb N}_0,\; k\in {\mathbb
Z}\text{ with }a+k>-1,\label{eq:1.3}
\end{equation} 
$c_{akj}=\Gamma(a+k+j+1)$, have a meaning in connection with $(-\Delta )^a$; here
$d=\operatorname{dist}(x,\partial\Omega )$, and $\gamma _jv$ is the
standard normal derivative $(\partial_\nu ^jv)|_{\partial\Omega }$. In
particular, defining the
\begin{align}
\label{eq:1.4}  \text{ Dirichlet
trace }\gamma_0^{a-1}u&=\Gamma (a)\gamma _0(u/d^{a-1}),\\ 
\text{ Neumann
 trace } \gamma _1^{a-1}u&=\Gamma (a+1)\gamma _1(u/d^{a-1}), 
\nonumber
\end{align} 
 one obtains
well-posed nonhomogeneous  boundary value problems on $\Omega $ for
$(-\Delta )^a$ 
and more general operators; see
\cite{G15} for the Dirichlet condition, and \cite{G14,G18} for the Neumann
condition. The solutions are found to lie in so-called $\mu $-transmission
spaces (recalled in Section 2 below) with $\mu =a-1$ or $a$.

%[Hold \o{}je med hvor $a<1$ indg\aa{}r.]

When $0<a<1$, the solutions $u$ with nonzero $\gamma _0^{a-1}u$ have
an unbounded
singularity like $d^{a-1}$ at the boundary (also studied in Abatangelo
\cite{A15}). However if $\gamma _0^{a-1}u=0$,
$u$ behaves like $d^a$ at the boundary, and $\gamma _1^{a-1}u$ coincides with
$\gamma _0^{a}u$. 

 Recently,  Abatangelo, Jarohs and Saldana in
\cite{AJS18},  with further coauthors in
\cite{AD$^+$19}, have studied nonhomogeneous boundary value problems for $(-\Delta )^a$ involving the trace operators
\eqref{eq:1.3}, on the unit ball resp.\ halfspace in $\rn$, with detailed calculations. 

\medskip
Formulas for integration by parts were first shown for functions with
$\gamma _0^{a-1}u=0$ by Ros-Oton and
Serra \cite{RS14,RS15} 
(and jointly with Valdinoci for more general
singular integral operators \cite{RSV17}) and Abatangelo \cite{A15}, leading to Pohozaev identities important for
uniqueness questions in nonlinear applications.
 In  \cite{G16b}, we extended the formulas to general $x$-dependent
$2a$-order pseudodifferential operators $P$ satisfying the
$a$-transmission 
condition at $\partial\Omega $.
 
More recently in \cite{G18} we obtained a general Green's formula for
functions $u,v$ in $(a-1)$-transmission spaces, allowing
both $\gamma _0^{a-1}u$ and $ \gamma _1^{a-1}u$ to be nonzero:
 \begin{equation}(Pu,v)_\Omega -(u,P^*v)_\Omega =(s_0\gamma _1^{a-1}u+B\gamma
_0^{a-1}u,\gamma _0^{a-1}v)_{\partial\Omega }-(s_0\gamma
_0^{a-1}u,\gamma _1^{a-1}v)_{\partial\Omega }.\label{eq:1.5}
\end{equation}
% where $\gamma _k^{a-1}u=\Gamma (a+k)\partial_n^k(u/d^{a-1})|_{\partial\Omega }$;
Here $s_0(x)$ is a function defined from the principal symbol of $P$, and $B$ is a first-order $\psi $do on $\partial\Omega $ depending on
the first two terms in the symbol of $P$. 
%The operator $B$ will
It is nonlocal in general. 

%\medskip

In the present paper we investigate how $B$ looks in particular
cases. We show that for  $(-\Delta )^a$ itself, the operator $B$ is zero:
\begin{equation}
((-\Delta )^au,v)_\Omega -(u,(-\Delta )^av)_\Omega =(\gamma _1^{a-1}u,\gamma _0^{a-1}v)_{\partial\Omega }-(\gamma
_0^{a-1}u,\gamma _1^{a-1}v)_{\partial\Omega },\label{eq:1.6}
\end{equation}
and for operators $(-\Delta +\pmb c(x)\cdot \nabla+c_0(x))^a$, $B$ is the
multiplication by a function derived from $\pmb c$ (Theorem
\ref{Theorem 5.1}). In
these cases, $B$ is local.

More generally, 
we investigate powers  $L^a$ of a general second-order strongly elliptic partial differential
operator $L$, finding formulas for $B$ in local coordinates
(Theorem \ref{Theorem 4.2}). It is seen here that when the normal component of the principal part of $L$
varies along $\partial\Omega $, $B$ can be nonlocal (Remark
\ref{Remark 4.3}).

\medskip
\noindent \emph {Plan of the paper:} In Section 2 we list some prerequisites and
recall the definition and properties of the $\mu $-transmission spaces that play an
important role as domains.
%; they are further analysed. 
In Section 3 we
find the symbol of the fractional power $L^a$ with two leading terms,
when $L$ is a strongly elliptic differential operator $-{\sum}_{j,k\le
n}a_{jk}\partial_j\partial_k+\pmb b\cdot \nabla +b_0$. In Section 4 we
determine the contribution from $L^a$ to the symbol of $B$, in the case
$\Omega =\rnp$. In Section 5 we apply this to the case of general
smooth bounded sets $\Omega $ when 
$L$ has principal part  $-\Delta $,
%in the general curved situation, 
showing that $B$ is the
multiplication by a certain function, which vanishes when the first-order
part 
%term $\pmb b\cdot\nabla$ 
is zero. The Appendix gives an analysis of
Green's formula for $-\Delta $, connecting the formula for the general
set $\Omega $ with the localized case and providing some
ingredients for the treatment of $(-\Delta )^a$. Some misprints in
\cite{G18} are listed at the end. 

\section {Notation and preliminaries, the $\mu $-transmission spaces}\label{sec2} 

Our notation has already been explained in several preceding papers
\cite{G14}--\cite{G18}, so we shall only recall the most important concepts
needed here.

Multi-index notation is used for
differentiation (and also for polynomials): 
$\partial=(\partial_1,\dots,\partial_n)$, and $\partial^\alpha
=\partial_1^{\alpha _1}\dots \partial_n^{\alpha _n}$ for $\alpha
\in{\mathbb N}_0^n$, with $|\alpha |=\alpha _1+\cdots+\alpha _n$, $\alpha
!=\alpha _1!\dots\alpha _n!$. $D=(D_1,\dots,D_n)$ with $D_j=-i\partial_j$.
The function $\ang\xi $ stands for $(1+|\xi |^2)^\frac12 $.

Operators are considered acting on functions or distributions on
${\mathbb R}^n$, and on subsets  
 $\rnpm=\{x\in
{\mathbb R}^n\mid x_n\gtrless 0\}$ (where $(x_1,\dots, x_{n-1})=x'$), and
 bounded $C^\infty $-subsets $\Omega $ with  boundary $\partial\Omega $, and
their complements.
Restriction from $\R^n$ to $\rnpm$ (or from
${\mathbb R}^n$ to $\Omega $ resp.\ $\complement\comega$) is denoted $r^\pm$,
 extension by zero from $\rnpm$ to $\R^n$ (or from $\Omega $ resp.\
 $\complement\comega$ to ${\mathbb R}^n$) is denoted $e^\pm$. Restriction
 from $\crnp$ or $\comega$ to $\partial\rnp$ resp.\ $\partial\Omega $
 is denoted $\gamma _0$.

We denote by $d(x)$ a function of the form $
d(x)=\operatorname{dist}(x,\partial\Omega )$ for $x\in\Omega $, $x$ near $\partial\Omega $,
extended to a smooth positive function on $\Omega $; $d(x)=x_n$ in the
case of $\rnp$. Then we define the spaces
\begin{equation}
\mathcal E_\mu (\comega)=e^+\{u(x)=d(x)^\mu v(x)\mid v\in C^\infty
(\comega)\},\label{eq:2.1}
\end{equation}
for $\operatorname{Re}\mu
 >-1$; for other $\mu $, cf.\ \cite{G15}.

A \emph {pseudodifferential operator} ($\psi $do) $P$ on ${\mathbb R}^n$ is
defined from a symbol $p(x,\xi )$ on ${\mathbb R}^n\times{\mathbb R}^n$ by 
\begin{equation}
Pu=p(x,D)u=\operatorname{Op}(p(x,\xi ))u 
=(2\pi )^{-n}\int e^{ix\cdot\xi
}p(x,\xi )\hat u\, d\xi =\mathcal F^{-1}_{\xi \to x}(p(x,\xi )\hat u(\xi
)),\label{eq:2.2}
\end{equation}  
using the Fourier transform $\F$, cf.\ \eqref{eq:1.1}ff. 
%$=\hat u(\xi
%)=\int_{{\mathbb R}^n}e^{-ix\cdot \xi }u(x)\, dx$. 
We refer to
textbooks such as H\"ormander \cite{H85}, Taylor \cite{T81}, Grubb \cite{G09} for the rules of
calculus. 
$p$ belongs to the symbol space $S^m_{1,0}({\mathbb R}^n\times{\mathbb R}^n)$, consisting of
$C^\infty $-functions $p(x,\xi )$
such that $\partial_x^\beta \partial_\xi ^\alpha p(x,\xi
)$ is $O(\ang\xi ^{m-|\alpha |})$ for all $\alpha ,\beta $, for some
$m\in{\mathbb R}$ (global estimates); then $P$ (and $p$) has order $m$.
$P$ (and $p$) is said to be \emph {classical} when $p$ moreover 
has an asymptotic expansion $p(x,\xi )\sim \sum_{j\in{\mathbb
N}_0}p_j(x,\xi )$ with $p_j$ homogeneous in $\xi $ of degree $m-j$ for
$|\xi |\ge 1$, all $j$, and $p(x,\xi )- \sum_{j<J}p_j(x,\xi )\in S^{m-J}_{1,0}({\mathbb
R}^n\times \R^n)$ for all $J$. 
%$P$ is then said to be \emph {elliptic}
%when $p_0(x,\xi )\ne 0$ for $|\xi |\ge 1$.

Recall in particular the composition rule: When $PQ=R$, then $R$ has
a symbol $r(x,\xi )$ with the following asymptotic expansion, called the
Leibniz product:
\begin{equation}
r(x,\xi )\sim p(x,\xi )\# q(x,\xi )= \sum_{\alpha \in{\mathbb
N}_0^n}\partial_\xi ^\alpha p(x,\xi ) D_x^aq(x,\xi )/\alpha !.\label{eq:2.3}
\end{equation}

When  $P$ (and $p$) is classical, it is said to be \emph
{even}, when
\begin{equation}
p_j(x,-\xi )=(-1)^jp_j(x,\xi ),\text{ all }j. \label{eq:2.4}
\end{equation}
Then if $P$ is of order $2a$, it satisfies the $a$-transmision condition defined in \cite{G15},
with respect to any smooth subset $\Omega $ of ${\mathbb
R}^n$. Even-order differential operators $L$ have this  evenness property,
and so do the powers $L^a$ (as constructed by Seeley \cite{S67}) when $L$ is strongly elliptic.

When $P$ is a $\psi $do on ${\mathbb R}^n$, $P_+=r^+Pe^+$
denotes its truncation to $\rnp$, or to $\Omega $, depending on the context.
%\medskip

%Let $1<p<\infty $  (with $1/p'=1-1/p$), then 
The $L_2$-Sobolev spaces are defined for $s\in{\mathbb R}$ by
\begin{align*}
H^s(\R^n)&=\{u\in \SD'({\mathbb R}^n)\mid \F^{-1}(\ang{\xi }^s\hat u)\in
L_2(\R^n)\},\\
\dot H^{s}(\comega)&=\{u\in H^{s}({\mathbb R}^n)\mid \supp u\subset
\comega \}, \text{ the supported space}\\
\ol H^{s}(\Omega)&=\{u\in \D'(\Omega )\mid u=r^+U \text{ for a }U\in
H^{s}(\R^n)\}, \text{ the restricted space};
\end{align*} 
here $\operatorname{supp}u$ denotes the support of $u$. The definition
is also used with $\Omega =\rnp$. In most current texts, $\ol
H^s(\Omega )$ is denoted $H^s(\Omega )$ without the overline (that
was introduced along with the notation $\dot H$ in \cite{H66,H85}), but we keep it here since it is
practical in indications of dualities, and makes the notation more
clear in formulas where
both types occur. 
%When $p=2$, the mention of $p$ is left out.
We recall that $\ol H^s(\Omega )$ and $\dot H^{-s}(\comega)$ are dual
spaces with respect to a sesquilinear duality extending the $L_2(\Omega )$-scalar
product, written e.g.\
\begin{equation*}
\ang{f,g}_{\ol H^s(\Omega ),\dot H^{-s}(\comega)},\text{ or just }\ang{f,g}_{\ol H^s,\dot H^{-s}}.
\end{equation*}

There are many other interesting scales of spaces, the
Bessel-potential spaces $H^s_p$,
the
Triebel-Lizorkin  spaces $F^s_{p,q}$ and the Besov spaces $B^s_p$ and $B^s_{p,q}$, where
the problems can be studied; see details in \cite{G14,G15}. This includes the H\"older-Zygmund spaces $B^s_{\infty
,\infty }$, also denoted $C^s_*$; they are interesting because $C^s_*({\mathbb R}^n)$ equals
the H\"older space $C^s({\mathbb R}^n)$ when $s\in\rp\setminus {\mathbb
N}$. The survey in \cite{G18a} Sect.\ 3 recalls the theory in
$H^s_p$-spaces. We continue here with $p=2$.

A special role in the theory is played by the \emph{order-reducing
operators}. There is a simple definition of operators $\Xi _\pm^t $ on
${\mathbb R}^n$ for $t\in{\mathbb R}$,
\begin{equation} 
\Xi _\pm^t =\operatorname{Op}(\chi _\pm^t),\quad \chi _\pm^t=(\ang{\xi '}\pm i\xi _n)^t ;\label{eq:2.5} 
\end{equation}
 they preserve support
in $\crnpm$, respectively. The functions
$(\ang{\xi '}\pm i\xi _n)^t $ do not satisfy all the estimates
required for the class $S^{t }_{1,0}({\mathbb
R}^n\times{\mathbb R}^n)$, but the operators are useful for many
purposes. There is a more refined choice $\Lambda _\pm^t $
\cite{G90, G15}, with
symbols $\lambda _\pm^t (\xi )$ that do
satisfy all the estimates for $S^{ t }_{1,0}({\mathbb
R}^n\times{\mathbb R}^n)$; here $\overline{\lambda _+^t }=\lambda _-^{t }$.
The symbols have holomorphic extensions in $\xi _n$ to the complex
halfspaces ${\mathbb C}_{\mp}=\{z\in{\mathbb C}\mid
\operatorname{Im}z\lessgtr 0\}$; it is for this reason that the operators preserve
support in $\crnpm$, respectively. Operators with that property are
called "plus" resp.\ "minus" operators. There is also a pseudodifferential definition $\Lambda
_\pm^{(t )}$ adapted to the situation of a smooth domain $\Omega
$, cf.\ \cite{G15}.

It is elementary to see by the definition of the spaces $H^s(\R^n)$
in terms of Fourier transformation, that the operators define homeomorphisms 
for all $s$: $
\Xi^t _\pm\colon H^s(\R^n) \simto H^{s- t
}(\R^n)$, $  
\Lambda ^t _\pm\colon H^s (\R^n) \simto H^{s- t
} (\R^n)$.
%(and so does of course $\Xi ^t =\Op(\ang \xi ^t )=\ang D^t$). 
The special
interest is that the "plus"/"minus" operators also 
 define
homeomorphisms related to $\crnp$ and $\comega$, for all $s\in{\mathbb R}$: 
$
\Xi ^{t }_+\colon \dot H^s(\crnp )\simto
\dot H^{s- t }(\crnp)$, $
r^+\Xi ^{t }_{-}e^+\colon \ol H^s(\rnp )\simto
\ol H^{s- t } (\rnp )$, with similar statements for $
\Lambda^{(t )}_\pm$ relative to $\Omega $.
Moreover, the operators $\Xi ^t _{+}$ and $r^+\Xi ^{t }_{-}e^+$ identify with each other's adjoints
over $\crnp$, because of the support preserving properties;
there is a
similar statement for $\Lambda ^{(t )}_+$ and $r^+\Lambda ^{(
t )}_{-}e^+$ relative to the set $\Omega $.

The special  $\mu $-\emph{transmission spaces} were 
introduced by
H\"ormander \cite{H66} and redefined in \cite{G15} (we just recall them for real $\mu >-1$):
\begin{align}\label{eq:2.6}
H^{\mu (s)}(\crnp)&=\Xi _+^{-\mu }e^+\ol H^{s- \mu
}(\rnp)=\Lambda  _+^{-\mu }e^+\ol H^{s- \mu
}(\rnp)
,\quad  s> \mu -\tfrac12,\\ \nonumber
H^{\mu (s)}(\comega)&=\Lambda  _+^{(-\mu )}e^+\ol H^{s- \mu
}(\Omega ),\quad  s> \mu -\tfrac12;
\end{align}
%\end{equation}
they are the appropriate solution spaces for homogeneous Dirichlet
problems for elliptic operators $P$ having the $\mu
$-transmission property (cf.\ \cite{G15}).
We also recall 
that $r^+P$  maps $\mathcal E_\mu
 (\comega)$ (cf.\ \eqref{eq:2.1}) into 
$C^\infty (\comega)$, and that $\mathcal E_\mu (\comega)$ is the solution
space for the homogeneous
Dirichlet problem with data in $C^\infty (\comega)$.  $\mathcal E_\mu (\comega)$ is dense in  $H^{\mu
(s)}(\comega)$ for all $s$, and $\bigcap_s H^{\mu
(s)}(\comega)=\mathcal E_\mu (\comega)$. (For $\Omega =\rnp$, $\mathcal
E_\mu (\crnp)\cap \mathcal E'({\mathbb R}^n)$ is dense in $H^{\mu
(s)}(\crnp)$ for all $s$.)

One has that $H^{\mu
 (s)}(\comega)\supset \dot H^s(\comega)$, and the elements are
 locally in $H^s$   on $\Omega $, but at the boundary they in general have a
 singular behavior (cf.\ \cite{G15} Th.\ 5.4):
\begin{equation}
H^{\mu (s)}(\comega)\begin{cases} =\dot H^s(\comega) \text{ if }s\in
\,]\mu -\tfrac12,\mu +\tfrac12[\,,\\
\subset \dot H^{s}(\comega)+e^+d^\mu  \ol H^{s- \mu }(\Omega)
\text{ if }s>\mu +\tfrac12, s-\mu -\frac12\notin {\mathbb N}.
\end{cases}
\label{eq:2.7}
\end{equation}

The inclusion in the second line of \eqref{eq:2.7} has recently been sharpened
in \cite{G19} to a precise description: When $ s \in \,]\mu +M
-\tfrac12,\mu +M +\tfrac12[\,$, $M\in{\mathbb N}$, then
\begin{equation}
H^{\mu (s)}(\comega)= \dot H^{s}(\comega)+\widetilde{\mathcal K}^\mu _M
{\prod}_{j=0}^{M-1}H^{s- \mu -j -\frac12}(\partial\Omega),
\label{eq:2.8}
\end{equation}
where $\widetilde{\mathcal K}^\mu _M$ is $d^\mu $ times a system of
 Poisson operators in the Boutet de Monvel calculus constructed in a
 simple way from a Poisson operator $K_{(0)}$ solving the Dirichlet
 problem for $-\Delta $. For $M=1$,  $\widetilde{\mathcal K}^\mu _1$ is
 proportional to $d^\mu K_{(0)}$.

Analogous results hold in the other scales of function spaces
($H^s_p$, $B^s_p$, $F^s_{p,q}$, $B^s_{p,q}$) mentioned above. Let us
 in particular mention the H\"older-Zygmund spaces
$C^s_*=B^s_{\infty ,\infty }$ (coinciding with ordinary H\"older
spaces for $s\in \rp\setminus{\mathbb N}$). Here the $\mu $-transmission
spaces are defined by 
\begin{align}\label{eq:2.9}
C_*^{\mu (s)}(\crnp)&=\Xi _+^{-\mu }e^+\ol C_*^{s- \mu
}(\rnp)=\Lambda  _+^{-\mu }e^+\ol C_*^{s- \mu
}(\rnp)
,\quad  s> \mu -1,\\ \nonumber
C_*^{\mu (s)}(\comega)&=\Lambda  _+^{(-\mu )}e^+\ol C_*^{s- \mu
}(\Omega ),\quad  s> \mu -1.
\end{align}
Again, $C_*^{\mu
 (s)}(\comega)\supset \dot C_*^s(\comega)$, and the elements are
 locally in $C_*^s$   on $\Omega $. More precisely,
$C_*^{\mu (s)}(\comega) =\dot C_*^s(\comega) \text{ if }s\in
\,]\mu -1,\mu [\,$, and 
 when $s 
\in \,]\mu +M-1,\mu +M [\,$ for an $M\in{\mathbb N}$:
\begin{equation}
C_*^{\mu (s)}(\comega)= \dot C_*^{s}(\comega)+
\widetilde{\mathcal K}^\mu _M
{\prod}_{j=0}^{M-1}C_*^{s- \mu -j }(\partial\Omega)
\subset \dot C_*^{s}(\comega)+ e^+d^\mu  \ol C_*^{s- \mu }(\Omega),
\label{eq:2.10}
\end{equation}
cf.\ \cite{G14,G19}. The spaces $C^s_*$ are denoted $\Lambda _s$ in
Stein \cite{S70} and sequels.

In the present paper, we shall in particular work with the spaces where $\mu =a-1$, which is
negative in the important case where $0<a<1$. The results in cases where $a>1$, for
example for $(-\Delta )^{3/2}=|\nabla |^3$, should also be of
interest.
 
 Note that we always have $\mathcal
E_{a-1}(\comega)$ as a dense subset.

\section { Powers of a second-order elliptic  differential operator }\label{sec3}

The following result was shown in \cite{G18}:

\begin{thm}\label{Theorem 3.1}  Let $P$ be a classical
$\psi $do on ${\mathbb R}^n$ of order $2a>0$ 
 (not
necessarity elliptic), with even symbol, cf.\ {\rm \eqref{eq:2.4}}, and let
$\Omega $ equal $\rnp$ or a smooth bounded subset of ${\mathbb R}^n$. The
following Green's formula holds for $u,v\in H^{(a-1)(s)}(\comega)$
when  $s>a+\frac12$ with $s\ge 2a$:
\begin{equation*}
%\aligned
\int_{\Omega }(Pu\bar v-u\overline {P^*v})\, dx=
(s_0\gamma _1^{a-1}u+B\gamma
_0^{a-1}u,\gamma _0^{a-1}v)_{L_2(\partial\Omega )}-(s_0\gamma
_0^{a-1}u,\gamma _1^{a-1}v)_{L_2(\partial\Omega )}.
\end{equation*}
(When only $s>a+\frac12$, the formula holds with the left-hand side
interpreted as dualities.)
Here  $s_0(x)=p_0(x,\nu (x))$ at boundary points $x$ with interior
normal $\nu (x)$, and $B$ is a first-order $\psi $do on $\partial\Omega $. In the case
$\Omega =\rnp$, the symbol of $B$ equals the jump at $z_n=0$ of the
distribution $\mathcal F^{-1}_{\xi _n\to z_n}q(x',0,\xi )$, where $q$ is the
symbol of $Q=\Xi ^{-a}_-P\Xi ^{-a}_+$ (the case of curved $\Omega $ is
derived from this). 

\end{thm}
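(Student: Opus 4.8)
\textbf{Proof plan for Theorem \ref{Theorem 3.1}.}

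The plan is to reduce the Green's formula to a statement on $\rnp$ via order-reducing operators, and there identify the boundary term by an explicit Fourier-analytic computation. First I would recall from \cite{G18} (or reprove) that for $u,v\in H^{(a-1)(s)}(\comega)$ with $s>a+\tfrac12$, the Dirichlet and Neumann traces $\gamma_0^{a-1}u=\Gamma(a)\gamma_0(u/d^{a-1})$ and $\gamma_1^{a-1}u=\Gamma(a+1)\gamma_1(u/d^{a-1})$ are well-defined, continuous on the relevant Sobolev spaces of $\partial\Omega$, and that $\E_{a-1}(\comega)$ is dense; this lets me prove the formula first for $u,v\in\E_{a-1}(\comega)$ and then pass to the limit by continuity of both sides (the left side being a duality pairing $\ang{Pu,v}_{\ol H^{s-2a},\dot H^{2a-s}}$ when $s<2a$, and an honest integral when $s\ge 2a$). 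The evenness hypothesis \eqref{eq:2.4} is what guarantees $P$ satisfies the $a$-transmission condition, so that $r^+P$ maps $\E_{a-1}(\comega)$ into $C^\infty(\comega)$ and the pairings make sense.

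The core is the flat case $\Omega=\rnp$. Here I would set $Q=\Xi_-^{-a}P\Xi_+^{-a}$, a classical $\psi$do of order $0$ whose symbol $q(x,\xi)$ has, by the composition rule \eqref{eq:2.3} and the evenness of $P$, leading terms computable from $p_0,p_1$ and the explicit symbols $\chi_\pm^{-a}=(\ang{\xi'}\pm i\xi_n)^{-a}$. Writing $u=\Xi_+^{-a}U$, $v=\Xi_+^{-a}V$ with $U,V\in \ol H^{s-a}$ supported in $\crnp$ (this is exactly the definition \eqref{eq:2.6} of the transmission space, with $\mu=a-1$; note $\Xi_+^{-a}=\Xi_+^{1}\circ\Xi_+^{-(a-1)}$ up to lower-order corrections, so one must be careful to match the normalization of the traces), the left side $\ang{Pu,v}-\ang{u,P^*v}$ becomes $\ang{r^+Q_+ e^+ U,V}-\ang{U,r^+(Q^*)_+e^+V}$ plus boundary contributions coming from the fact that $\Xi_-^{-a}$ does \emph{not} preserve support in $\crnp$. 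The standard Boutet de Monvel/Wiener–Hopf bookkeeping then shows that $r^+Qe^+$ and $r^+(Q^*)e^+$ differ by a singular Green operator whose symbol-kernel at $x_n=y_n=0$ is governed by the jump across $\xi_n$-integration of $q(x',0,\xi)$; concretely, the boundary sesquilinear form picks up $\bigl(\operatorname{jump}_{z_n=0}\F^{-1}_{\xi_n\to z_n}q(x',0,\xi)\,\gamma_0 U,\gamma_0 V\bigr)_{\partial\rnp}$, and one disentangles from this the $s_0\gamma_1^{a-1}u$, $s_0\gamma_0^{a-1}v$ terms (which come from the part of $q$ that is the symbol of the identity, i.e.\ from $p_0(x,\nu)$) leaving $B$ as the first-order $\psi$do on $\partial\rnp=\R^{n-1}$ with the stated symbol. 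The order-$1$ claim for $B$ follows because $q$ is order $0$ but its first two homogeneous terms $q_0,q_1$ contribute: $q_0(x',0,\xi)$ is homogeneous of degree $0$ in $\xi$ and its $\xi_n$-jump gives an order-$0$ piece (absorbed with $s_0$), while $q_1$ homogeneous of degree $-1$ gives, after the $\F^{-1}_{\xi_n}$ jump and the chain-rule factors relating $\gamma_1^{a-1}$ to $\gamma_0^{a-1}$ on functions like $d^{a-1}v$, a term homogeneous of degree $1$ in $\xi'$.

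For general curved $\Omega$ I would localize: cover $\partial\Omega$ by coordinate patches, straighten the boundary, and use the pseudodifferential order-reducing operators $\Lambda_\pm^{(-a)}$ adapted to $\Omega$ in place of $\Xi_\pm^{-a}$. The transformation law for $\psi$do symbols under the boundary-straightening diffeomorphism, together with the invariance of the traces $\gamma_j^{a-1}$ (which are intrinsically defined via $d$), shows the boundary form is a well-defined sesquilinear form on $\partial\Omega$ whose local symbol is computed by the flat formula applied in each chart; the contributions from the localization functions and from lower-order terms in the symbol calculus are of order $\le 0$ and can be merged, leaving the principal symbol of $B$ coordinate-independent on $\partial\Omega$. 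The main obstacle I anticipate is the careful bookkeeping of which terms land in the "$s_0\gamma_1^{a-1}$" slot versus the "$B\gamma_0^{a-1}$" slot: both traces of $u$ enter through $U$ and its normal derivative, and separating them cleanly requires tracking the $d^{a-1}$ vs $d^a$ expansion of elements of $H^{(a-1)(s)}$ (as in \eqref{eq:2.7}–\eqref{eq:2.8}) and matching the $\Gamma$-function normalizations in \eqref{eq:1.4}; getting the exact symbol of $B$ (not just its principal part) is where the computation is genuinely delicate, and it is precisely this symbol that the later sections will evaluate for $P=L^a$.
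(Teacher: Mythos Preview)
The present paper does not prove Theorem~\ref{Theorem 3.1}; it is quoted as a result established in \cite{G18} (see the line immediately preceding the theorem statement: ``The following result was shown in \cite{G18}''), with the half-space case appearing as Theorem~4.1 there and the passage to curved $\Omega$ as Theorem~4.4. There is therefore no proof in this paper against which to compare your proposal.

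That said, your outline aligns with the strategy visible in the theorem statement and in the way the paper uses the result in Sections~\ref{sec4}--\ref{sec5}: form $Q=\Xi_-^{-a}P\Xi_+^{-a}$ of order $0$, read off the boundary contribution as the jump of $\F^{-1}_{\xi_n\to z_n}q(x',0,\xi)$, and pass to curved $\Omega$ by localization (exactly as the proof of Theorem~\ref{Theorem 5.1} does, following \cite{G18}, Th.~4.4). One inaccuracy worth flagging: you write $u=\Xi_+^{-a}U$ and call this ``exactly the definition \eqref{eq:2.6} of the transmission space, with $\mu=a-1$,'' but \eqref{eq:2.6} gives $H^{(a-1)(s)}=\Xi_+^{-(a-1)}e^+\ol H^{s-a+1}$, not $\Xi_+^{-a}$. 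The mismatch by one power of $\Xi_+$ is not cosmetic: it is precisely this extra factor (morally a normal antiderivative) that makes \emph{both} traces $\gamma_0^{a-1}u$ and $\gamma_1^{a-1}u$ enter the boundary form, and sorting that out cleanly is the delicate bookkeeping you correctly anticipate at the end of your sketch.
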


Since $C^s_*\subset H^{s }$, the formula is in particular
valid when $u,v\in C^{(a-1)(s)}_*(\comega)$ for some $s>a+\frac12$ with
$s> 2a$; then $Pu$ and $P^*v$ are continuous functions on $\comega$. 

We now want to describe $B$ more precisely in interesting special cases.
A natural class of operators $P$ satifying the hypotheses arises from
taking $a$'th powers of second-order differential operators; it will
be studied in the following.

Consider a general second-order strongly  elliptic partial
differential operator given on ${\mathbb R}^n$ or on an open subset containing
the set $\comega$ we are interested in,
\begin{align} \label{eq:3.1}
L&=-{\sum}_{j,k\le n}a_{jk}\partial_j\partial_k +\pmb b(x)\cdot \nabla
+b_0(x)=L_0+L_1+L_2,
\text{ with symbols} \\
\ell&=\ell_0+\ell_1+\ell_2,
\ell_0(\xi  )={\sum}_{j,k\le n}a_{jk}\xi _j\xi_k,\quad
\ell_1(x,\xi )= \pmb b(x)\cdot i\xi, 
%ic_1(x)\xi  _1+\dots + ic_n(x)\xi _n, 
\quad l_2(x)=b_0(x),\nonumber
\end{align}
where the $a_{jk}(x)$, $\pmb b(x)=(b_1(x),\dots,b_n(x))$ and  $b_0(x)$
are bounded complex $C^\infty $-functions. The strong ellipticity means that 
\begin{equation*}
\operatorname{Re}{\sum}_{j,k\le n} a_{jk}(x)\xi _j\xi _k\ge c|\xi |^2,
\text{ for all }\xi \in{\mathbb R}^n,
\end{equation*}
with $c>0$.

We can describe the fractional powers by use of Seeley's analysis
\cite{S67}.
Assume that the functions $a_{jk}(x),b_j(x),b_0(x)$ have been extended to all of ${\mathbb
R}^n$, such that $L$ equals $1-\Delta $ outside a large
ball. The resolvent of $L$ is the
inverse of $L-\lambda $, defined when $\lambda $ is in the resolvent
set; it includes a truncated sector
$V=\{\lambda \in{\mathbb C}\mid |\arg
\lambda -\pi  |\le\frac{\pi }2+\delta \,,|\lambda |\ge R\}$ for some large
$R$
and small $\delta $. If the matrix $(a_{j,k})_{j,k\le n}$ is real (or
hermitian symmetric), $V$ can be taken as
$\{\lambda \in{\mathbb C}\mid |\arg
\lambda -\pi |\le \pi -\delta \,,|\lambda |\ge R\}$ for some large $R$
and small $\delta $. The resolvent
symbol $\tilde \ell_\lambda $ is constructed by use of the Leibniz
product formula \eqref{eq:2.3} from the symbol
$\ell-\lambda $ of $L-\lambda $.

It is known from \cite{S67} that the resolvent symbol has an expansion
in symbols $\tilde \ell_{l,\xi }$ homogeneous of degree $-2-l$ in $(\xi ,|\lambda |^{1/2})$,
\begin{align}\label{eq:3.2} 
\tilde \ell_\lambda &\sim \sum_{l=0,1,2,\dots}   \tilde\ell_{l,\lambda
},\text{ with}\\  \nonumber
\tilde\ell_{0,\lambda }&=(\ell_0-\lambda )^{-1},\quad \tilde\ell_{l,\lambda }=\sum_{ l/2\le k\le 2l} {
c_{l,k}(x,\xi )}\tilde\ell_{0,\lambda }^{-k-1}\text{ for }l=1,2,\dots, 
\end{align}
the $c_{l,k}(x,\xi ) $ being polynomials in $\xi $ of degree $2k-l$.

Let us work out the construction in exact form up to the second
homogeneous term (homogeneous of degree  $-3$ with respect to $(\xi ,\mu )$, $\mu =(-\lambda
)^{1/2}$), with the subsequent terms grouped together under the indication $l.o.t.$ (lower order
terms). We use $l.o.t.$ to denote terms of order at least two integers lower
than the principal term, in each step in the deduction (this precision
is all we need for the discussion of Green's formula).

The principal term in the resolvent symbol is $\tilde \ell_{0,\lambda
}=(\ell_0(x,\xi )-\lambda )^{-1}$, as noted. Now
\begin{align*}
%\aligned
(\ell-\lambda )\# \tilde \ell_{0,\lambda }&=(\ell_0-\lambda
)\tilde\ell_{0,\lambda }+\ell_1 \tilde\ell_{0,\lambda }+
\sum_{j=1}^n\tfrac
1i\partial_{\xi _j}\ell_0\,\partial_{x_j}\tilde\ell_{0,\lambda }+l.o.t.\\
&=1+i\pmb b(x)\cdot \xi \,\tilde\ell_{0,\lambda }+
\sum_{j=1}^ni\partial_{\xi _j}\ell_0\,\partial_{x_j}\ell_0\,\tilde\ell_{0,\lambda }^2+l.o.t.\\
&=1+r, \text{ where }\\
r&=i\pmb b(x)\cdot \xi \,\tilde\ell_{0,\lambda }+
\sum_{j=1}^ni\partial_{\xi _j}\ell_0\,\partial_{x_j}\ell_0\,\tilde\ell_{0,\lambda }^2+l.o.t.
\end{align*}
Since $(1+r)\#(1-r)=1-r\# r$ with $r\# r$ of order $-2$, it follows that 
\begin{equation*}
(\ell-\lambda )\# \tilde \ell_{0,\lambda }\# (1-r)=1+l.o.t.,
\end{equation*}
so $\ell-\lambda$ has a right parametrix
\begin{equation}
 \tilde \ell_{\lambda }= \tilde \ell_{0,\lambda }\#
 (1-r)+l.o.t.=\tilde\ell_{0,\lambda }
-i\pmb b(x)\cdot \xi\, \tilde\ell_{0,\lambda }^2-\sum_{j=1}^ni\partial_{\xi _j}\ell_0\,\partial_{x_j}\ell_0\,\tilde\ell_{0,\lambda }^3+l.o.t.\label{eq:3.3}
\end{equation}
One finds similarly a left parametrix, and concludes (by a standard
argument in elliptic theory) that $\tilde \ell_{\lambda }$ is a
two-sided parametrix.

Now the fractional powers are constructed by use of Cauchy integral formulas:

We can describe $L^a$ approximately as 
\begin{equation}L^a=\tfrac i{2\pi }\int_{\mathcal C}\lambda ^a(L-\lambda )^{-1}\, d\lambda
  ,\label{eq:3.4}
\end{equation}
with some interpretation: The curve $\mathcal C$ is chosen %in ${\mathbb
%C}\setminus \crm$ 
to encircle the
spectrum of $L$ in the positive direction, except possibly for a
finite set of eigenvalues of finite multiplicity (it can for example
consist of the rays $\{\lambda =re^{i(\frac{\pi}2 -\delta )}\mid \infty >r\ge
r_0\}$  and $\{\lambda =re^{i(\frac{3\pi }2 +\delta )}\mid r_0\le r< \infty \}$
connected by a small curve going clockwise around zero  $\{\lambda
=r_0e^{i\theta }\mid \frac{\pi }2-\delta
<\theta < \frac{3\pi }2+\delta \}$).
The integral converges
when $a<0$; for positive $a$ one can involve recomposition with
integer powers of $L$. %[Leave out further details for the moment.]

The symbol $p$
 of $P=L^a$ then satisfies
\begin{equation}
p(x,\xi )=\tfrac i{2\pi }\int_{\mathcal C}\lambda ^a\bigl((\ell_0-\lambda
)^{-1}-i \pmb b\cdot \xi (\ell_0-\lambda )^{-2}+\sum_{j=1}^ni\partial_{\xi _j}\ell_0\,\partial_{x_j}\ell_0(\ell_{0 }-\lambda )^3\bigr)\, d\lambda +l.o.t.;\label{eq:3.5}
\end{equation}
here the formula holds as it stands when $a<0$, and since the integration
curve can for each $(x,\xi )$ be replaced by a closed
curve $\mathcal C_0$ around $\ell_0(x,\xi )$, the formula generalizes to all $a$.

The first term gives, by Cauchy's formula, that the principal symbol of $P$ is 
$%$\begin{equation*}
p_0=\ell_0^a.
$ %$\end{equation*}
The next terms give
\begin{align*}
p_1&=-i\pmb b\cdot \xi\,\tfrac i{2\pi }\int_{\mathcal C_0}\lambda ^a
(\ell_0-\lambda )^{-2}\, d\lambda -\sum_{j=1}^ni\partial_{\xi
          _j}\ell_0\,\partial_{x_j}\ell_0\tfrac i{2\pi }\int_{\mathcal
          C_0}\lambda ^a(\ell_{0 }-\lambda )^3\, d\lambda \\
 &=-i\pmb b\cdot \xi\,\tfrac i{2\pi
}\int_{\mathcal C_0}\lambda ^a\tfrac d{d\lambda } (\ell_0-\lambda )^{-1}\,
   d\lambda\\
  &\qquad-{\sum}_{j=1}^ni\partial_{\xi
_j}\ell_0\,\partial_{x_j}\ell_0\tfrac i{2\pi }\int_{\mathcal C_0}\lambda
^a\tfrac12\tfrac{ d^2}{d\lambda ^2 }(\ell_{0 }-\lambda )^{-1}\, d\lambda\\
&=i\pmb b\cdot \xi\,\tfrac i{2\pi
}\int_{\mathcal C_0}\bigl(\tfrac d{d\lambda }\lambda ^a \bigr)(\ell_0-\lambda )^{-1}\,
                                                                            d\lambda\\
  &\qquad-{\sum}_{j=1}^ni\partial_{\xi
_j}\ell_0\,\partial_{x_j}\ell_0\tfrac i{2\pi }\int_{\mathcal C_0}\tfrac{ d^2}{d\lambda ^2 }\lambda
^a\tfrac12(\ell_{0 }-\lambda )^{-1}\, d\lambda\\
&=i\pmb b\cdot \xi\,\tfrac i{2\pi
}\int_{\mathcal C_0}a\lambda ^{a-1} (\ell_0-\lambda )^{-1}\,d\lambda \\
  &\qquad-{\sum}_{j=1}^ni\partial_{\xi
_j}\ell_0\,\partial_{x_j}\ell_0\tfrac i{2\pi }\int_{\mathcal C_0}\tfrac12 a(a-1)\lambda
^{a-2}(\ell_{0 }-\lambda )^{-1}\, d\lambda\\
&=i\pmb b\cdot\xi \, a\ell_0^{a-1}-\tbinom a 2{\sum}_{j=1}^ni\partial_{\xi
_j}\ell_0\,\partial_{x_j}\ell_0\, \ell_0^{a-2},
\end{align*}
evaluated for $\xi \ne 0$ so that the negative powers of $\ell_0$ make sense.
Thus we find that the symbol of $P$ satisfies (for $\xi \ne 0$):
\begin{align} \label{eq:3.6}
p(x,\xi )&=p_0(x,\xi )+p_1(x,\xi )+l.o.t.,\\ \nonumber
p_0&=
\ell_0^a, \quad p_1=\ell_0^a(ia\pmb b\cdot \xi \ell_0^{-1}-\tbinom a 2{\sum}_{j=1}^ni\partial_{\xi
_j}\ell_0\,\partial_{x_j}\ell_0\,\ell_0^{-2})).
\end{align}

\section {The boundary term $B$ in Green's formula}\label{sec4}

We know from \cite{G18} Th.\ 4.1 that the symbol of the $\psi $do $B$ entering
in Green's formula for $P$ in the half-space situation equals  (for
$|\xi '|
\ge 1$):
\begin{equation}
 b(x',\xi ')=\lim_{z_n\to
0+}\check q(x',0,\xi ',z_n)-\lim_{z_n\to
0-}\check q(x',0,\xi ',z_n)\equiv \operatorname{jump}\mathcal F^{-1}_{\xi _n\to z_n}q(x',0,\xi ),\label{eq:4.1}
\end{equation}
where $q(x,\xi )$ is the symbol of $Q=\Xi_-^{-a}P\Xi_+^{-a}$, and
$\check q(x,\xi ',z_n)=\mathcal F^{-1}_{\xi _n\to z_n}q$. For $P=L^a$ as
above we shall first describe $q$ with two precise terms.

Recalling that  $\Xi _\pm^{-a}$ are the (generalized) $\psi $do's with
symbols $\chi _\pm^{-a}=(\ang{\xi '}\pm i\xi _n)^{-a}$, we have that
the symbol of $Q$ satisfies, by \eqref{eq:3.6} and the Leibniz
product formula \eqref{eq:2.3},
\begin{align}\label{eq:4.2}
q(x,\xi )&=\chi _-^{-a}\# p\# \chi _+^{-a}=\chi _-^{-a}\#
p \chi _+^{-a}\\ \nonumber
&=\ang{\xi } ^{-2a}(p_0+p_1 )+{\sum}_{j\le
n}\tfrac1i \partial_{\xi _j}\chi _-^{-a}\,\partial_{x_j}p_0\,\chi
_+^{-a}+l.o.t.,
\end{align}
using that $\chi _{-}^{-a}\chi _+^{-a}=\ang\xi ^{-2a}$. Here
\begin{equation}
\ang{\xi } ^{-2a}(p_0+p_1 )
=\bigl(\tfrac{l_0}{\ang{\xi } ^2}\bigr)^a(1+ia\pmb b\cdot \xi
\ell_0^{-1}
-\tbinom a 2{\sum}_{j=1}^ni\partial_{\xi
  _j}\ell_0\,\partial_{x_j}\ell_0\,\ell_0^{-2}),\label{eq:4.3}
\end{equation}
and, since $\partial_{\xi _j}\ang {\xi '}=\xi _j\ang {\xi '}^{-1}$,
\begin{align}\nonumber
{\sum}_{j\le
n}\tfrac1i \partial_{\xi _j}\chi _-^{-a}\,\partial_{x_j}p_0\,\chi
_+^{-a}&={\sum}_{j<n}i a\chi _-^{-a-1}\partial_{\xi _j}\ang{\xi '}\,
a\ell_0^{a-1}\partial_{x_i}\ell_0\,\chi _+^{-a}\\ \label{eq:4.4}
&\quad +i a\chi _-^{-a-1}(-i)
a\ell_0^{a-1}\partial_{x_n}\ell_0\,\chi _+^{-a}\\ \nonumber
&=ia^2\bigl(\tfrac{l_0}{\ang{\xi } ^2}\bigr)^a\chi _-^{-1}\ell_0^{-1}({\sum}_{j<n}\xi _j\ang{\xi '}^{-1}\partial_{x_j}\ell_0-i\partial_{x_n}\ell_0).
\end{align}
We observe that the resulting expressions  have the form of a product
of $\bigl(\frac{l_0}{\ang{\xi } ^2}\bigr)^a$ with a rational function
of $\xi _n$ that is $O(\xi _n^{-1})$ for $|\xi _n|\to \infty $. This
prepares the way for evaluating $b(x',\xi ')$ in \eqref{eq:4.1}, but we first have
to deal also with the factor  $\bigl(\frac{l_0}{\ang{\xi } ^2}\bigr)^a$. Write
\begin{equation}
\frac{l_0}{\ang\xi ^2}=\frac{a_{nn}\ang{\xi }^2+{\sum}'_{jk}a_{jk}\xi _j\xi _k-a_{nn}\ang{\xi '}^2}{\ang\xi ^2}
=a_{nn}(1+\frac{c(x,\xi )}{\ang{\xi }^2}),\label{eq:4.5}
\end{equation}
where $\sum'$ denotes the sum omitting the term with $j=k=n$, and\begin{equation}
c(x,\xi
)=a_{nn}^{-1}\bigl({\sum}_{j<n}(a_{jn}+a_{nj})\xi _j\xi _n +{\sum}_{j,k<n}a_{jk}
\xi _j\xi _k\bigr)-\ang{\xi '}^2.\label{eq:4.6}
\end{equation} 
Note that
$c(x,\xi )$ is a second-order polynomial in $\xi $ of order 1 in
$\xi _n$. Then, by Taylor expansion of $(1+t)^a$,
\begin{equation}
\bigl(\frac{l_0}{\ang{\xi } ^2}\bigr)^a=a_{nn}^a\bigl(1+\frac{c(x,\xi
)}{\ang{\xi }^2}\bigr)^a=a_{nn}^a\bigl(1-a\frac{c(x,\xi
)}{\ang{\xi }^2}+\binom a 2 \frac{c(x,\xi
)^2}{\ang{\xi }^4}+O(\xi _n^{-3})
\bigr),\label{eq:4.7}
\end{equation}
for $|\xi _n|\to\infty $. (Only the expansion up to $O(\xi
_n^{-2})$ is used in the following.)

This leads to:

\begin{thm}\label{Theorem 4.1}
 Let $P=L^a$ on ${\mathbb R}^n$, where $L$ is a second-order  strongly elliptic
differential operator {\rm \eqref{eq:3.1}}, and let $q$ be the symbol of $Q=\Xi
_-^{-a}P\Xi _+^{-a}$, defined relative to the halfspace $\rnp$. Let $|\xi '|\ge 1$. As a function of $\xi _n$,
$q$ satisfies:
\begin{align} \nonumber
a_{nn}^{-a}q(x,\xi )&=1-\frac {aa_{nn}^{-1}{{\sum}}_{j<n}(a_{jn}+a_{nj})\xi _j\xi _n}{\ang\xi ^2}+ia
b_n\xi _n\ell_0^{-1}-\tbinom a2 i\xi _n\partial_{x_n}a_{nn}\,\ell_0^{-1}\\
&\quad +a^2a_{nn}^{-1}\chi _-^{-1}({\sum}_{j<n}i\xi
_j\ang{\xi '}^{-1}\partial_{x_j}a_{nn}+\partial_{x_n}a_{nn})
+O(\xi _n^{-2}).
 \label{eq:4.8}
\end{align}
\end{thm}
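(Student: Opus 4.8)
The plan is to assemble \eqref{eq:4.8} from the preparatory identities \eqref{eq:4.2}--\eqref{eq:4.7}; the content lies in organising the expansion and in tracking which terms are negligible. First I would note that \eqref{eq:4.2} already determines $q$ modulo symbols of order $\le-2$: the symbol of $Q$ is $\chi_-^{-a}\#p\#\chi_+^{-a}$, and since $\chi_+^{-a}$ is independent of $x$ the Leibniz rule \eqref{eq:2.3} gives $p\#\chi_+^{-a}=p\,\chi_+^{-a}$; the only Leibniz terms in $\chi_-^{-a}\#(p\,\chi_+^{-a})$ of order $\ge-1$ are the zeroth, $\ang\xi^{-2a}(p_0+p_1)$, and the first, $\sum_{j\le n}\tfrac1i\partial_{\xi_j}\chi_-^{-a}\,\partial_{x_j}p_0\,\chi_+^{-a}$, because the second-order Leibniz terms, the $p_1$-contribution to the first-order term, and the $l.o.t.$ in $p$ all have order $\le-2$. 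Since a symbol of order $\le-2$ is $O(\xi_n^{-2})$ as $|\xi_n|\to\infty$ uniformly for $|\xi'|\ge1$ --- the precision relevant to the subsequent jump computation --- it suffices to expand those two pieces modulo $O(\xi_n^{-2})$.

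Next I would substitute $p=p_0+p_1$ from \eqref{eq:3.6}: using $\chi_-^{-a}\chi_+^{-a}=\ang\xi^{-2a}$ this gives \eqref{eq:4.3} for the zeroth piece, and differentiating $\chi_-^{-a}$ in $\xi_j$ (distinguishing $j<n$ from $j=n$) together with $\partial_{x_j}p_0=a\ell_0^{a-1}\partial_{x_j}\ell_0$ and $\chi_-^{-a-1}\chi_+^{-a}=\chi_-^{-1}\ang\xi^{-2a}$ gives \eqref{eq:4.4} for the first-order piece. Both carry the common factor $(\ell_0/\ang\xi^2)^a$; I would then use \eqref{eq:4.5}--\eqref{eq:4.7}: write $\ell_0/\ang\xi^2=a_{nn}\bigl(1+c(x,\xi)/\ang\xi^2\bigr)$ with $c$ of degree $1$ in $\xi_n$ (so $c/\ang\xi^2=O(\xi_n^{-1})$), and Taylor-expand $(1+t)^a$.

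Then I would multiply out and collect modulo $O(\xi_n^{-2})$, using: $\ell_0^{-1}=O(\xi_n^{-2})$ and $\chi_-^{-1}=O(\xi_n^{-1})$; $\pmb b\cdot\xi=b_n\xi_n+O(1)$ in $\xi_n$, so only $b_n\xi_n$ survives in $ia\pmb b\cdot\xi\,\ell_0^{-1}$; $\partial_{x_j}\ell_0$ is quadratic in $\xi$ with $\xi_n^2$-coefficient $\partial_{x_j}a_{nn}$, whence $\partial_{x_j}\ell_0\,\ell_0^{-1}=a_{nn}^{-1}\partial_{x_j}a_{nn}+O(\xi_n^{-1})$; and, since $p_1$ and the first-order Leibniz term are already $O(\xi_n^{-1})$, there the factor $(\ell_0/\ang\xi^2)^a$ may be replaced by $a_{nn}^a$, whereas in the ``$1$'' of \eqref{eq:4.3} one must also keep the first Taylor correction $-a\,c/\ang\xi^2$ (and within it only the $\xi_j\xi_n$ part of $c$, the rest being $O(\xi_n^{-2})$). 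Dividing by $a_{nn}^a$ produces \eqref{eq:4.8}: the ``$1$'' contributes its first two terms, the $\pmb b$-term contributes $iab_n\xi_n\ell_0^{-1}$, the sum $-\binom a2\sum_j i\partial_{\xi_j}\ell_0\,\partial_{x_j}\ell_0\,\ell_0^{-2}$ contributes the $\partial_{x_n}a_{nn}\,\ell_0^{-1}$ term, and \eqref{eq:4.4} contributes the remaining $\chi_-^{-1}$ term.

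The hard part will be this last collection: one must see that products which look at first like $O(\xi_n^{-3})$ --- above all $\partial_{\xi_j}\ell_0\,\partial_{x_j}\ell_0\,\ell_0^{-2}$ --- are in fact only $O(\xi_n^{-1})$, because $\partial_{x_j}\ell_0$ has a $\xi_n^2$ term, and one must check that all surviving $O(\xi_n^{-1})$ contributions (over $j\le n$) are kept while the genuinely lower-order cross-terms, such as $(c/\ang\xi^2)\,ia\pmb b\cdot\xi\,\ell_0^{-1}$, are discarded.
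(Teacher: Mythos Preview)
Your proposal is correct and follows essentially the same route as the paper's proof: both assemble \eqref{eq:4.8} from \eqref{eq:4.2}--\eqref{eq:4.7} by expanding each piece modulo $O(\xi_n^{-2})$, using that $(\ell_0/\ang\xi^2)^a$ may be replaced by $a_{nn}^a$ against factors already of order $O(\xi_n^{-1})$, that only $b_n\xi_n$ survives in $\pmb b\cdot\xi\,\ell_0^{-1}$, and the key reduction $\xi_n^2\ell_0^{-1}=a_{nn}^{-1}+O(\xi_n^{-1})$ (your $\partial_{x_j}\ell_0\,\ell_0^{-1}=a_{nn}^{-1}\partial_{x_j}a_{nn}+O(\xi_n^{-1})$ is the same observation). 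Your preliminary remark that symbols of order $\le -2$ are automatically $O(\xi_n^{-2})$ for $|\xi'|\ge 1$ makes the role of the ``$l.o.t.$'' more explicit than in the paper, but otherwise the arguments coincide.
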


\begin{proof} In the various expressions we absorb terms that are
  $O(\xi _n^{-2})$ in the remainder. For \eqref{eq:4.7}
  we have, using \eqref{eq:4.6}:
\begin{equation}
\bigl(\frac{l_0}{\ang{\xi } ^2}\bigr)^a=a_{nn}^a\bigl(1-\frac{aa_{nn}^{-1}\xi _n{{\sum}}_{j<n}(a_{jn}+a_{nj})\xi _j}{\ang{\xi }^2}+O(\xi _n^{-2})
\bigr).\label{eq:4.9}
\end{equation}
Now consider the terms in \eqref{eq:4.3}. For the first term, we  note:
\begin{equation}
ia\pmb b\cdot \xi \ell_0^{-1}=ia b_n\xi _n\ell_0^{-1}+O(\xi
_n^{-2}).\label{eq:4.10}
\end{equation}
For the second term we shall use that $\ell_0=a_{nn}\xi _n^2+O(\xi
_n)$ implies
\begin{equation}
\xi _n^2\ell_0^{-1}=(a_{nn}^{-1}\ell_0+O(\xi _n))\ell_0^{-1}=a_{nn}^{-1}+O(\xi _n^{-1}),\label{eq:4.11}
\end{equation}
in the calculation
\begin{align} \nonumber
-\tbinom a 2{\sum}_{j=1}^ni\partial_{\xi
_j}\ell_0\,\partial_{x_j}\ell_0\,\ell_0^{-2}&=-\tbinom a2ia_{nn}\xi
_n\partial_{x_n}\ell_0\,\ell_0^{-2}+O(\xi _n^{-2})\\ \label{eq:4.12}
&=-\tbinom a2i\xi _n^3a_{nn}\partial_{x_n}a_{nn}\,\ell_0^{-2}+O(\xi
_n^{-2})\\ \nonumber
&=-\tbinom a2 i\xi _n\partial_{x_n}a_{nn}\,\ell_0^{-1}+O(\xi
_n^{-2}).\end{align}
The expression in \eqref{eq:4.4} satisfies
\begin{align}
\label{eq:4.13}
ia^2\chi _-^{-1}\ell_0^{-1}&({\sum}_{j<n}{\xi _j}\ang{\xi
'}^{-1}\partial_{x_j}\ell_0-i\partial_{x_n}\ell_0)\\ \nonumber
&=ia^2\xi _n^2\chi
_-^{-1}\ell_0^{-1}({\sum}_{j<n}\xi _j\ang{\xi '}^{-1}\partial_{x_j}a_{nn}-i\partial_{x_n}a_{nn})+O(\xi _n^{-2})\\ \nonumber
&=a^2a_{nn}^{-1}\chi
_-^{-1}({\sum}_{j<n}i\xi _j\ang{\xi
'}^{-1}\partial_{x_j}a_{nn}+\partial_{x_n}a_{nn})+O(\xi _n^{-2}),
\end{align}
again using \eqref{eq:4.11}.
This gives \eqref{eq:4.8}, when the terms are collected in \eqref{eq:4.2}.\qed
\end{proof}

To find $b(x',\xi ')$, the jump of $\check q$ at $z_n=0$, we appeal to
a little of the knowledge used in the Boutet de Monvel calculus.
Recall from \cite{G96,G09} that the space $\mathcal H^+=\mathcal
F(e^+r^+\mathcal S({\mathbb R}))$ consists of functions of $\xi _n\in{\mathbb R}$ that
are $O(\xi _n^{-1})$ at infinity and extend holomorphically into the
lower halfplane 
${\mathbb C}_-$ (with further estimates), and that there is a similar
space  $\mathcal H^-_{-1}=\mathcal
F(e^-r^-\mathcal S({\mathbb R}))$ consisting of functions that
 extend holomorphically into the upper halfplane 
${\mathbb C}_+$; it is the conjugate
space of $\mathcal H^+$. All we shall use here is that the fractional
terms 
in $q$ can (for $|\xi '|\ge 1$) be decomposed 
into parts in
$\mathcal H^+$ and $\mathcal H^-_{-1}$ with respect to $\xi _n$, in view of the formulas
\begin{align} \nonumber
\ang\xi ^{-2}&=\frac 1{\ang{\xi '}^2+\xi _n^2}=\frac 1{2\ang{\xi
'}}\bigl(\frac 1{\ang{\xi '}+i\xi _n}+\frac 1{\ang{\xi '}-i\xi
_n}\bigr),\\ \nonumber
 \frac{-2i\xi
_n}{\ang\xi ^2}&=
\frac 1{\ang{\xi '}+i\xi _n}-\frac 1{\ang{\xi '}-i\xi
_n},\\ \nonumber
 \chi _-^{-1}&=\frac 1 {\ang{\xi '}-i\xi _n},
 \\ \label{eq:4.14}
\ell_0 ^{-1}&=\frac 1{{\sum} a_{jk}\xi _j\xi _k}=\frac1{a_{nn}(\sigma _++i\xi
_n)(\sigma _--i\xi _n)}\\ \nonumber
&=\frac 1{a_{nn}(\sigma _++\sigma _-)}\bigl(\frac
1{\sigma _++i\xi _n}+\frac 1{\sigma _--i\xi _n}\bigr),\\ \nonumber
-i\xi _n\ell_0 ^{-1}&=\frac {-i\xi _n}{a_{nn}(\sigma _++\sigma _-)}\bigl(\frac
1{\sigma _++i\xi _n}+\frac 1{\sigma _- -i\xi _n}\bigr)\\ \nonumber
&=\frac {1}{a_{nn}(\sigma _++\sigma _-)}\bigl(\frac
{\sigma _+}{\sigma _++i\xi _n}-\frac {\sigma _-}{\sigma _--i\xi _n}\bigr).
\end{align}
Here $\pm i\sigma _{\pm}$ are the roots of ${\sum}_{jk}a_{jk}\xi _j\xi _k$  in ${\mathbb C}_\pm$ with respect to $\xi _n$,
 respectively
(then $\text{Re }\sigma _\pm>0$).
When $\operatorname{Re}\sigma >0$, $(\sigma -i\xi
_n)^{-1}\in \mathcal H^-_{-1}$ and $(\sigma +i\xi
_n)^{-1}\in \mathcal H^+$, and 
(with $H$ equal to the Heaviside function $1_{\crp}$)
\begin{equation}
\mathcal F_{\xi _n\to z_n}^{-1}\frac 1{\sigma +i\xi _n}=H(z_n)e^{-\sigma
z_n}, \quad \mathcal F_{\xi _n\to z_n}^{-1}\frac 1{\sigma -i\xi _n}=H(-z_n)e^{\sigma
z_n};\label{eq:4.15}
\end{equation}
these functions have the limit 1 for $z_n\to 0+$,
resp.\ $z_n\to 0-$.

Then from \eqref{eq:4.14} follows  for example: 
\begin{align} \nonumber
\text{(i) }&\operatorname{jump}\mathcal F_{\xi _n\to z_n}^{-1}\ang\xi ^{-2}=0,\\
\text{(ii) }&\operatorname{jump}\mathcal F_{\xi _n\to z_n}^{-1}\xi _n\ang\xi
^{-2}=i,\label{eq:4.16}\\ \nonumber
\text{(iii) }&\operatorname{jump}\mathcal F_{\xi _n\to z_n}^{-1}\chi
               _-^{-1}=-1,\\ \nonumber
\text{(iv) }&\operatorname{jump}\mathcal F_{\xi _n\to z_n}^{-1}\xi
_n\ell_0^{-1}=ia_{nn}^{-1}.
              \end{align}

This leads to:

\begin{thm}\label{Theorem 4.2}
 Assumptions as in Theorem \ref{Theorem 4.1}. The symbol
$b(x',\xi ')$ defined by {\rm \eqref{eq:4.1}} satisfies:
\begin{align} \nonumber
b(x',\xi ')&=-aa_{nn}^{a-1}{{\sum}}_{j<n}(a_{jn}+a_{nj})i\xi
_j-aa_{nn}^{a-1}b_n+\tbinom a2 a_{nn}^{a-1}\partial_{x_n}a_{nn}\\
&\quad-a^2a_{nn}^{a-1}({\sum}_{j<n}i\xi
_j\ang{\xi '}^{-1}\partial_{x_j}a_{nn}+\partial_{x_n}a_{nn}),\label{eq:4.17}
\end{align}
all coefficients evaluated at $x_n=0$.
\end{thm}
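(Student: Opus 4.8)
The plan is to compute the jump in \eqref{eq:4.1} term by term from the expression for $q$ furnished by Theorem \ref{Theorem 4.1}, using the decomposition formulas \eqref{eq:4.14} and the elementary jump identities \eqref{eq:4.16}. Since the $O(\xi_n^{-2})$ remainder in \eqref{eq:4.8} lies in $\mathcal H^+\dotplus\mathcal H^-_{-1}$ and is continuous across $z_n=0$ (its inverse Fourier transform is continuous, so its jump is $0$), only the four explicit terms contribute, and after multiplying through by $a_{nn}^a$ we may work with $q$ itself rather than $a_{nn}^{-a}q$; the factor $a_{nn}^a$ will combine with the $a_{nn}^{-1}$ that appears in several jumps to produce the overall $a_{nn}^{a-1}$ in \eqref{eq:4.17}.

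First I would dispose of the constant term $1$: its inverse Fourier transform is $\delta(z_n)$, whose "jump" (difference of one-sided limits) is $0$, so it contributes nothing. Next, the term $-aa_{nn}^{-1}\sum_{j<n}(a_{jn}+a_{nj})\xi_j\xi_n\ang\xi^{-2}$: by \eqref{eq:4.16}(ii), $\operatorname{jump}\mathcal F^{-1}\xi_n\ang\xi^{-2}=i$, so this yields $-aa_{nn}^{-1}\sum_{j<n}(a_{jn}+a_{nj})\xi_j\cdot i$, which after the $a_{nn}^a$ factor gives the first term of \eqref{eq:4.17}. For the two terms $iab_n\xi_n\ell_0^{-1}$ and $-\binom a2 i\xi_n\partial_{x_n}a_{nn}\,\ell_0^{-1}$, I would apply \eqref{eq:4.16}(iv), $\operatorname{jump}\mathcal F^{-1}\xi_n\ell_0^{-1}=ia_{nn}^{-1}$, giving $iab_n\cdot ia_{nn}^{-1}=-aa_{nn}^{-1}b_n$ and $-\binom a2 i\partial_{x_n}a_{nn}\cdot ia_{nn}^{-1}=\binom a2 a_{nn}^{-1}\partial_{x_n}a_{nn}$; multiplying by $a_{nn}^a$ produces the second and third terms of \eqref{eq:4.17}. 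Finally the term $a^2a_{nn}^{-1}\chi_-^{-1}\bigl(\sum_{j<n}i\xi_j\ang{\xi'}^{-1}\partial_{x_j}a_{nn}+\partial_{x_n}a_{nn}\bigr)$: here the $\xi_n$-dependent factor is just $\chi_-^{-1}$, whose jump is $-1$ by \eqref{eq:4.16}(iii) (the bracket is independent of $\xi_n$), giving $-a^2a_{nn}^{-1}\bigl(\sum_{j<n}i\xi_j\ang{\xi'}^{-1}\partial_{x_j}a_{nn}+\partial_{x_n}a_{nn}\bigr)$, and the $a_{nn}^a$ factor yields the last line of \eqref{eq:4.17}. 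Collecting everything and evaluating at $x_n=0$ (as dictated by \eqref{eq:4.1}, which freezes $x_n=0$) gives the claimed formula.

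The only genuinely delicate point is the justification that the $O(\xi_n^{-2})$ remainder contributes nothing to the jump: one must know that this remainder, as a function of $\xi_n$ for fixed $x',\xi'$ with $|\xi'|\ge1$, decomposes into a part in $\mathcal H^+$ and a part in $\mathcal H^-_{-1}$ that are each $O(\xi_n^{-2})$ — hence whose inverse Fourier transforms are \emph{continuous} across $z_n=0$ — so that the two one-sided limits agree. This is exactly the structural information recalled before \eqref{eq:4.14} about the spaces $\mathcal H^\pm$ in the Boutet de Monvel calculus, applied to the rational-in-$\xi_n$ shape of all the terms noted after \eqref{eq:4.4}; I would invoke it directly. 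Beyond that, the proof is a bookkeeping exercise: substitute \eqref{eq:4.8} into \eqref{eq:4.1}, use \eqref{eq:4.16}(i)--(iv) on each term, multiply out the factor $a_{nn}^a$, and restrict to $x_n=0$.
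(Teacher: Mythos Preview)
Your proposal is correct and follows essentially the same route as the paper's own proof: multiply \eqref{eq:4.8} through by $a_{nn}^a$, note that the constant term gives a $\delta(z_n)$ contribution and the $O(\xi_n^{-2})$ remainder has zero jump (its inverse Fourier transform being continuous), and then apply the jump identities \eqref{eq:4.16}(ii)--(iv) term by term to obtain \eqref{eq:4.17}. The only minor difference is that for the $O(\xi_n^{-2})$ remainder the paper simply invokes continuity of the inverse Fourier transform of an integrable function, whereas you phrase it via the $\mathcal H^\pm$ decomposition; both are fine.
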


\begin{proof} Consider $q(x',0,\xi ',\xi _n)$ described by \eqref{eq:4.8}
multiplied by $a_{nn}(x',0)^a$. To evaluate the inverse Fourier transform from $\xi _n$ to
$z_n$, we begin by noting that the first term contributes with
$a_{nn}(x',0)^a\delta (z_n)$, supported in $\{z_n=0\}$, which
disappears when the limits in \eqref{eq:4.1} are calculated. Moreover we will
use
that,
as already noted, symbols that are $O(\xi _n^{-2}) $ at infinity
transform to continuous functions of $z_n$, hence have jump 0. 

Now
consider the second term in the right-hand side of \eqref{eq:4.8}. Here we
find by use of \eqref{eq:4.16}(ii) that the jump, it contributes, equals
\begin{equation*}
-aa_{nn}^{a-1}{{\sum}}_{j<n}(a_{jn}+a_{nj})i\xi _j.
\end{equation*}
The third term is found by use of \eqref{eq:4.16}(iv) to contribute with
\begin{equation*}
-aa_{nn}^{a-1}b_n.
\end{equation*}
The fourth term
gives in view of \eqref{eq:4.16}(iv) the contribution 
\begin{equation*}
\tbinom a2 a_{nn}^{a-1}\partial_{x_n}a_{nn}.
\end{equation*}
The fifth term
gives by use of \eqref{eq:4.16}(iii) the contribution
\begin{equation*}
-a^2a_{nn}^{a-1}({\sum}_{j<n}i\xi
_j\ang{\xi '}^{-1}\partial_{x_j}a_{nn}+\partial_{x_n}a_{nn}).
\end{equation*}
The contributions are collected in \eqref{eq:4.17}.

\end{proof}

\begin{rem}\label{Remark 4.3} Observe that the only possibly nonlocal
contributions to $B=\Op(b(x',\xi '))$ come from the terms with $\ang{\xi
'}^{-1}\partial_{x_j}a_{nn}(x',0)$, $j<n$. So if the
first tangential
derivatives of $a_{nn}$ vanish on the boundary, $B$ is local, and otherwise it can be
nonlocal.
\end{rem}

A special case is where $L$ stems from the Laplacian.
In the reduction of the Laplacian to local coordinates described in
the Appendix, we arrive at an operator of the form (cf.\ \eqref{eq:6.14})
\begin{equation}
L=-\underline\Delta =-\underline\Delta '(y',y_n,\partial_{y'})-g(y')\partial_{y_n}-\partial_{y_n}^2.\label{eq:4.18}
\end{equation}
In comparison with the general expression \eqref{eq:3.1}, we here have
\begin{equation}
a_{nj}(y)=a_{jn}(y)\equiv 0\text{ for }j<n,\quad a_{nn}(y)\equiv 1,\quad b_n(y)=-g(y'),\label{eq:4.19}
\end{equation}
since 
$\underline\Delta '$ differentiates in the $y'$-variables only. The
derivatives of the functions $a_{nj}, a_{jn}, a_{nn}$ are zero. Hence
\eqref{eq:4.8} gives a much simplified expression for  $q$. We find, as special
cases of Theorems \ref{Theorem 4.1} and \ref{Theorem 4.2}:

\begin{cor}\label{Corollary 4.4} When $P=L^a$ with $L=-\underline\Delta $ in {\rm \eqref{eq:4.18}}, as obtained by reduction
of the Laplacian to local coordinates in the Appendix, then
\begin{equation}
q(y,\xi )=1-iag(y')\xi _n\ell_0^{-1}+O(\xi _n^{-2}),\label{eq:4.20}
\end{equation}
where $\ell_0=\ell'_0(y,\xi ')+\xi _n^2$, $\ell'_0$ being the
 principal symbol of $-\underline\Delta '$.
The symbol $b(y',\xi ')$ of $B$ is
\begin{equation}
b(y',\xi ')=ag(y').\label{eq:4.21}
\end{equation}
\end{cor}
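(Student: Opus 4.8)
The plan is to obtain Corollary~\ref{Corollary 4.4} as a direct specialization of Theorems~\ref{Theorem 4.1} and~\ref{Theorem 4.2} to the coefficient configuration \eqref{eq:4.19} coming from the local-coordinate form \eqref{eq:4.18} of the Laplacian. First I would record that in \eqref{eq:4.18} the second-order part is $-\underline\Delta '(y',y_n,\partial_{y'})-\partial_{y_n}^2$, so that $a_{nn}(y)\equiv 1$, $a_{jn}(y)=a_{nj}(y)\equiv 0$ for $j<n$, and the only first-order coefficient of relevance is $b_n(y)=-g(y')$; in particular $\ell_0(y,\xi)=\ell_0'(y,\xi')+\xi_n^2$ with $\ell_0'$ the principal symbol of $-\underline\Delta'$, and $\partial_{x_k}a_{nn}\equiv 0$, $\partial_{x_k}a_{jn}\equiv 0$ for all $k$.

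Next I would substitute these vanishings into the general symbol formula \eqref{eq:4.8}. Since $a_{nn}\equiv 1$, the factor $a_{nn}^{-a}$ on the left is $1$. The term $\frac{aa_{nn}^{-1}\sum_{j<n}(a_{jn}+a_{nj})\xi_j\xi_n}{\ang\xi^2}$ vanishes because all $a_{jn}+a_{nj}=0$ for $j<n$. The term $\binom a2 i\xi_n\partial_{x_n}a_{nn}\,\ell_0^{-1}$ vanishes because $\partial_{x_n}a_{nn}=0$, and the whole last line of \eqref{eq:4.8}, namely $a^2a_{nn}^{-1}\chi_-^{-1}(\sum_{j<n}i\xi_j\ang{\xi'}^{-1}\partial_{x_j}a_{nn}+\partial_{x_n}a_{nn})$, vanishes for the same reason. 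What survives is only $ia b_n\xi_n\ell_0^{-1}=-iag(y')\xi_n\ell_0^{-1}$, so $q(y,\xi)=1-iag(y')\xi_n\ell_0^{-1}+O(\xi_n^{-2})$, which is \eqref{eq:4.20}.

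Then I would derive \eqref{eq:4.21} by specializing Theorem~\ref{Theorem 4.2} in the same way, or equivalently by applying the jump formula \eqref{eq:4.1} directly to \eqref{eq:4.20}. In \eqref{eq:4.17} the first sum vanishes (all $a_{jn}+a_{nj}=0$), the term $\binom a2 a_{nn}^{a-1}\partial_{x_n}a_{nn}$ vanishes, the last line vanishes, and $a_{nn}^{a-1}=1$, leaving $b(y',\xi')=-aa_{nn}^{a-1}b_n=-a\,(-g(y'))=ag(y')$. Alternatively, taking the jump of $\mathcal F^{-1}_{\xi_n\to z_n}$ of \eqref{eq:4.20}: the constant $1$ contributes a term supported at $z_n=0$ (a delta), hence jump $0$; the $O(\xi_n^{-2})$ remainder is continuous in $z_n$, hence jump $0$; and by \eqref{eq:4.16}(iv), $\operatorname{jump}\mathcal F^{-1}_{\xi_n\to z_n}\xi_n\ell_0^{-1}=ia_{nn}^{-1}=i$, so the middle term contributes $-iag(y')\cdot i = ag(y')$. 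This gives \eqref{eq:4.21}.

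There is essentially no obstacle here: the corollary is a bookkeeping exercise, and the only point requiring a word of care is the observation that $\underline\Delta'$ differentiates only in $y'$, which is exactly what forces $a_{nj}=a_{jn}=0$ for $j<n$ and $a_{nn}\equiv1$ with all derivatives zero — the one structural input that kills every potentially nonlocal or $\binom a2$-type term in \eqref{eq:4.8} and \eqref{eq:4.17}. I would simply state this reduction explicitly and then quote \eqref{eq:4.8}, \eqref{eq:4.17}, and \eqref{eq:4.16}(iv) for the three numerical checks.
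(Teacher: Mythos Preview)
Your proposal is correct and follows exactly the approach the paper indicates: the text immediately preceding Corollary~\ref{Corollary 4.4} states that it is obtained ``as special cases of Theorems~\ref{Theorem 4.1} and~\ref{Theorem 4.2}'' using the coefficient identities \eqref{eq:4.19}, and your substitution of $a_{nn}\equiv 1$, $a_{jn}=a_{nj}\equiv 0$ for $j<n$, $b_n=-g(y')$, and $\partial_{x_k}a_{nn}\equiv 0$ into \eqref{eq:4.8} and \eqref{eq:4.17} carries this out accurately. The alternative verification of \eqref{eq:4.21} via the jump formula and \eqref{eq:4.16}(iv) is also valid and matches the computation behind Theorem~\ref{Theorem 4.2}.
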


As a slightly more general case, let $P=(-\Delta +\pmb
c(x)\cdot\nabla+c_0(x))^a$ on ${\mathbb R}^n$. On
$\partial\Omega $, 
$-\Delta +\pmb
c(x)\cdot\nabla+c_0(x)$ may be written in the form, cf.\ the Appendix:
\begin{equation}
-\Delta +\pmb
c(x)\cdot\nabla+c_0(x)=-\Delta _S-\mathcal G \frac {\partial }{\partial n}-\frac
{\partial^2 }{\partial n^2} +c_\nu \frac{\partial}{\partial
  n}+T+c_0,\label{eq:4.22} 
\end{equation}
where $\mathcal G=\operatorname{div} \nu  $, $c_\nu (x)=\pmb c(x)\cdot \nu (x)$ and $T$ is a first-order differential operator acting along
$\partial\Omega $.
In fact this decomposition extends to a tubular neighborhood of each
coordinate patch for $\partial\Omega $, as described in the Appendix for
$-\Delta $. When $c_\nu (x)$ on $\partial\Omega $ carries over to
$\underline c_\nu (y')$ at $t=0$, we extend it as constant in $t$ on the neighborhood.
Then  $P$ has the form $L_1^a$ in the
local coordinates, where 
\begin{equation}
L_1=-\underline\Delta '(y',y_n,\partial_{y'})
-\partial_{y_n}^2-(g(y')-{\underline c}_{\nu }(y'))\partial_{y_n}+\underline T+\underline c_0.\label{eq:4.23}
\end{equation}
Here we find:

\begin{cor}\label{Corollary 4.5} When $P=L_1^a$ with $L_1$ as in {\rm \eqref{eq:4.23}},
obtained by reduction to local coordinates
of the perturbed Laplacian  $-\Delta +\pmb
c(x)\cdot\nabla+c_0(x)$ (decomposed on $\partial\Omega $ as in {\rm
\eqref{eq:4.22}}),
then the corresponding symbol $q$ satisfies
\begin{equation}
q(y,\xi )=1-ia(g(y')-{\underline c}_{\nu }(y'))\xi _n\ell_0^{-1}+O(\xi _n^{-2}),\label{eq:4.24}
\end{equation}
with $\ell_0$ as in Corollary {\rm 4.4}.
The symbol $b(y',\xi ')$ of $B$ is a function of $y'$,
\begin{equation} 
b(y',\xi ')=a(g(y')-{\underline c}_{\nu }(y')).\label{eq:4.25}
\end{equation}
\end{cor}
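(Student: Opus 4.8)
The plan is to obtain this as a direct specialization of Theorems \ref{Theorem 4.1} and \ref{Theorem 4.2}, or equivalently as Corollary \ref{Corollary 4.4} with $g(y')$ replaced by $g(y')-\underline c_\nu(y')$. Granted the reduction recorded in \eqref{eq:4.22}--\eqref{eq:4.23} — where the Laplacian contributes its tubular-coordinate form $-\underline\Delta'(y',y_n,\partial_{y'})-g(y')\partial_{y_n}-\partial_{y_n}^2$ from the Appendix, the normal component $c_\nu\partial_n$ of $\pmb c\cdot\nabla$ is added to the $\partial_{y_n}$-coefficient, and the tangential component of $\pmb c\cdot\nabla$ together with all curvature corrections is swept into a first-order operator $\underline T$ along the boundary — the operator $P$ becomes $L_1^a$ in local coordinates, and its $B$-symbol is given by the half-space recipe \eqref{eq:4.1} applied to $q=\chi_-^{-a}\# p\#\chi_+^{-a}$ for $P=L_1^a$, the passage to curved $\Omega$ being the one already built into Theorem \ref{Theorem 3.1}.

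First I would read off the coefficients of $L_1$ in the notation of \eqref{eq:3.1}. Exactly as in \eqref{eq:4.19}, $\underline\Delta'(y',y_n,\partial_{y'})$ and $\underline T$ carry no $\partial_{y_n}$, so $a_{nn}(y)\equiv 1$ and $a_{jn}(y)=a_{nj}(y)\equiv 0$ for $j<n$; the sole structural change relative to \eqref{eq:4.18} is that the coefficient of $\partial_{y_n}$ is now $b_n(y)=-(g(y')-\underline c_\nu(y'))$ instead of $-g(y')$. The coefficients $b_j$ ($j<n$) and $b_0$ produced by $\underline T$ and $\underline c_0$ do occur, but they enter neither the right-hand side of \eqref{eq:4.8} modulo $O(\xi_n^{-2})$ nor the right-hand side of \eqref{eq:4.17}. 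Feeding these data into Theorem \ref{Theorem 4.1}: all derivatives of $a_{nn}\equiv 1$ vanish and $a_{jn}+a_{nj}=0$ for $j<n$, so $a_{nn}^{-a}=1$ and in \eqref{eq:4.8} every term collapses except the leading $1$ and $ia\,b_n\xi_n\ell_0^{-1}$, which gives $q(y,\xi)=1-ia(g(y')-\underline c_\nu(y'))\xi_n\ell_0^{-1}+O(\xi_n^{-2})$, i.e. \eqref{eq:4.24} (valid for $|\xi'|\ge 1$). Feeding the same data into Theorem \ref{Theorem 4.2}, only the term $-a a_{nn}^{a-1}b_n$ in \eqref{eq:4.17} survives, equal to $-a\cdot 1\cdot\bigl(-(g(y')-\underline c_\nu(y'))\bigr)=a(g(y')-\underline c_\nu(y'))$, so $b(y',\xi')=a(g(y')-\underline c_\nu(y'))$, which is \eqref{eq:4.25}; in particular it depends on $y'$ only.

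The only point needing care is internal to the reduction: one must check that absorbing the tangential part of $\pmb c\cdot\nabla$ and the curvature corrections into $\underline T$, and extending $\underline c_\nu$ as constant in $y_n$ over the tubular neighbourhood, alters $q$ only by $O(\xi_n^{-2})$ and leaves $b$ unchanged. This follows from the structure of the Section \ref{sec4} computations — the coefficients appearing in \eqref{eq:4.17} are all evaluated at $y_n=0$, and tangential-derivative, $\partial_{y_n}$-independent, and zero-order data contribute only to symbol terms at least two orders below the leading one — so I expect no genuine obstacle beyond this bookkeeping; the substantive work was already done in Theorems \ref{Theorem 4.1}--\ref{Theorem 4.2} and in the Appendix treatment of $-\Delta$.
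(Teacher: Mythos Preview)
Your proposal is correct and follows exactly the route the paper intends: Corollary \ref{Corollary 4.5} is stated in the paper without a separate proof, as an immediate specialization of Theorems \ref{Theorem 4.1} and \ref{Theorem 4.2} (parallel to Corollary \ref{Corollary 4.4}), and you have simply written out that specialization by inserting $a_{nn}\equiv 1$, $a_{jn}=a_{nj}\equiv 0$ for $j<n$, and $b_n=-(g-\underline c_\nu)$ into \eqref{eq:4.8} and \eqref{eq:4.17}. Your closing remarks about why the tangential operator $\underline T$, the zero-order term $\underline c_0$, and the constant-in-$t$ extension of $\underline c_\nu$ do not affect the outcome are also in line with the paper's treatment.
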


\section {Green's formula for the fractional Laplacian and its perturbations}\label{sec5}

The above considerations in local coordinates will now be applied to
find Green's formula in the curved situation  for the 
powers of the perturbed Laplacian, in particular
for the fractional Laplacian $(-\Delta )^a$ itself.

\begin{thm}\label{Theorem 5.1} Let $\Omega $ be a smooth bounded subset of
${\mathbb R}^n$, and let  
$P=(-\Delta +\pmb
c\cdot\nabla+c_0)^a$, $a>0$. Let
$u,v\in H^{(a-1)(s)}(\comega)$, $s>a+\frac12$. When $s\ge 2a$,
\begin{align} \label{eq:5.1}
&\int_{\Omega }\bigl(P u\bar v-u\overline {P^* v}\bigr)\,
dx\\ \nonumber
&=(\gamma _1^{a-1}u -ac_\nu \gamma _0^{a-1}u,\gamma
_0^{a-1}v)_{L_2(\partial\Omega )}
-( \gamma _0^{a-1}u,\gamma _1^{a-1}v)_{L_2(\partial\Omega )}
 \\ \nonumber
&\equiv \Gamma (a)\Gamma (a+1)\int_{\partial\Omega }\bigl((\gamma
_1(\tfrac {u_k}{t^{a-1}}) -c_\nu \gamma _0(\tfrac{u_k}{t^{a-1}}))\gamma _0(\tfrac{\bar v_l}{t^{a-1}})
 -\gamma _0(\tfrac{u_k}{t^{a-1}})\gamma
_1(\tfrac{\bar v_{l}}{t^{a-1}})
\bigr)
\, d\sigma, 
\end{align}
where $c_\nu(x) =\pmb c(x)\cdot \nu (x)$.
The formula extends to general $s$ with $s>a+\frac12$, when the left-hand side is
replaced by 
\begin{equation}
\ang{r^+Pu,v}_{\ol H^{-a+\frac12+\varepsilon }(\Omega ), \dot
H^{a-\frac12-\varepsilon }(\comega)}-\ang{u,r^+P^*v}_{\dot H^{a-\frac12-\varepsilon }(\comega),\ol
H^{-a+\frac12+\varepsilon }(\Omega )}.\label{eq:5.2}
\end{equation}
In particular, the fractional Laplacian $(-\Delta )^a$ satisfies
\begin{align} \nonumber
\int_{\Omega }\bigl(&(-\Delta )^au\bar v-u\overline {(-\Delta )^a v}\bigr)\,
dx=(\gamma _1^{a-1}u,\gamma
_0^{a-1}v)_{L_2(\partial\Omega )}
-( \gamma _0^{a-1}u,\gamma _1^{a-1}v)_{L_2(\partial\Omega )}
\\
&\equiv \Gamma (a)\Gamma (a+1)\int_{\partial\Omega }\bigl(\gamma
_1(\tfrac u{t^{a-1}})\gamma _0(\tfrac{\bar v}{t^{a-1}})
 -\gamma _0(\tfrac{u}{t^{a-1}})\gamma
_1(\tfrac{\bar v}{t^{a-1}})\bigr)
\, d\sigma . \label{eq:5.3}
\end{align}
\end{thm}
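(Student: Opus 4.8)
The plan is to combine the abstract Green's formula of Theorem~\ref{Theorem 3.1} with the explicit computation of the boundary symbol $b(x',\xi ')$ that was carried out in the half-space model, and then transport the result to the curved setting via the local-coordinate reduction of $-\Delta +\pmb c\cdot\nabla +c_0$ described in the Appendix. First I would note that $P=(-\Delta +\pmb c\cdot\nabla +c_0)^a$ is the $a$-th power of a strongly elliptic second-order differential operator, hence (by Seeley's construction, recalled in Section~\ref{sec3}) a classical $\psi$do of order $2a$ with even symbol, so Theorem~\ref{Theorem 3.1} applies to any $u,v\in H^{(a-1)(s)}(\comega)$ with $s>a+\tfrac12$: it yields
\begin{equation*}
\int_\Omega(Pu\bar v-u\overline{P^*v})\,dx=(s_0\gamma_1^{a-1}u+B\gamma_0^{a-1}u,\gamma_0^{a-1}v)_{L_2(\partial\Omega)}-(s_0\gamma_0^{a-1}u,\gamma_1^{a-1}v)_{L_2(\partial\Omega)},
\end{equation*}
with $s_0(x)=p_0(x,\nu(x))$. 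Since the principal symbol of $P$ is $p_0=\ell_0^a=|\xi|^{2a}$ at the principal level (the principal part of $-\Delta+\pmb c\cdot\nabla+c_0$ being $-\Delta$), we have $s_0(x)=|\nu(x)|^{2a}=1$ on $\partial\Omega$; this already accounts for the two ``diagonal'' terms in \eqref{eq:5.1}. It remains to identify $B$.

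Next I would invoke the local analysis. Theorem~\ref{Theorem 3.1} reduces the identification of $B$ on $\partial\Omega$ to the half-space model obtained by reduction to local coordinates; concretely, in a coordinate patch the operator $-\Delta+\pmb c\cdot\nabla+c_0$ takes the form $L_1$ of \eqref{eq:4.23}, and Corollary~\ref{Corollary 4.5} tells us that the corresponding boundary symbol is the \emph{function}
\begin{equation*}
b(y',\xi')=a\bigl(g(y')-\underline c_\nu(y')\bigr),
\end{equation*}
so $B$ acts as multiplication by $a(g-\underline c_\nu)$ in each patch. The term $g(y')$ comes purely from the curvature contribution $-g(y')\partial_{y_n}$ in the flattened Laplacian $-\underline\Delta$ of \eqref{eq:4.18}, and the term $-\underline c_\nu(y')$ is the normal component $\pmb c\cdot\nu$ of the first-order perturbation. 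Here I would use the Appendix: for $P=(-\Delta)^a$ itself, i.e.\ $\pmb c=0$ and $c_0=0$, the claim is that the curvature term $ag$ does \emph{not} survive globally, i.e.\ the invariantly-defined operator $B$ on $\partial\Omega$ is zero, so that only the intrinsic (coordinate-independent) combination $ag-a\underline c_\nu$ matters and globally this equals $-ac_\nu$ as a multiplication operator on $\partial\Omega$. This is exactly the point where the Appendix's analysis of Green's formula for $-\Delta$ (connecting the curved formula with the localized one) is needed: it shows that the $g$-contribution is a coordinate artifact of the flattening, canceled by the Jacobian factors / the density $d\sigma$ versus $dy'$ discrepancy, leaving the genuine boundary operator $B=-ac_\nu$, a multiplication operator (hence local). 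Substituting $B\gamma_0^{a-1}u=-ac_\nu\gamma_0^{a-1}u$ into the abstract formula gives \eqref{eq:5.1}, and the equivalent expression in terms of $\gamma_j(u/t^{a-1})$ follows from the definitions \eqref{eq:1.4} with $c_{a0j}=\Gamma(a+j+1)$ so that $\gamma_0^{a-1}u=\Gamma(a)\gamma_0(u/t^{a-1})$ and $\gamma_1^{a-1}u=\Gamma(a+1)\gamma_1(u/t^{a-1})$, producing the factor $\Gamma(a)\Gamma(a+1)$.

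The extension to general $s>a+\tfrac12$ (without $s\ge 2a$) is the standard density/mapping-property argument already built into Theorem~\ref{Theorem 3.1}: $\mathcal E_{a-1}(\comega)$ is dense in $H^{(a-1)(s)}(\comega)$, $r^+P$ maps $H^{(a-1)(s)}(\comega)$ continuously into $\ol H^{s-2a}(\Omega)\subset\ol H^{-a+\frac12+\varepsilon}(\Omega)$ for suitable small $\varepsilon$, the trace maps $\gamma_0^{a-1},\gamma_1^{a-1}$ are continuous on these spaces into Sobolev spaces on $\partial\Omega$, and both sides of the identity are continuous in $(u,v)$; so the formula, verified on the dense subset where the left side is a genuine integral, extends with the left side reinterpreted as the duality pairing \eqref{eq:5.2}. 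Finally, specializing $\pmb c=0$, $c_0=0$ gives $c_\nu\equiv0$ and hence \eqref{eq:5.3}. The main obstacle is the curved-to-flat transport: one must verify carefully that the non-invariant term $ag$ arising in Corollary~\ref{Corollary 4.5} cancels when passing back to $\partial\Omega$ with its intrinsic surface measure, so that $B$ is genuinely the multiplication by $-ac_\nu$ (in particular $B=0$ for $-\Delta$); this is precisely what the Appendix is designed to supply, and everything else is bookkeeping with the symbol calculus and the $\mu$-transmission mapping properties.
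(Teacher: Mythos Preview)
Your plan follows the same strategy as the paper: apply the abstract Green's formula with an undetermined $B$, compute $s_0=1$, use Corollary~\ref{Corollary 4.5} for the local boundary symbol $a(g-\underline c_\nu)$, and then argue that the curvature piece $ag$ cancels in the passage back to curved coordinates. This is correct, and the density argument for the extension to $s>a+\tfrac12$ is as in the paper.

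There is, however, one point where you are too quick. You write that the cancellation of the $ag$ term ``is precisely what the Appendix is designed to supply.'' The Appendix only carries out this cancellation for $a=1$ (ordinary $\gamma_0,\gamma_1$); for general $a$ the mechanism is the same in spirit but requires an extra computation that you have not written and that the paper performs explicitly. Namely, in local coordinates the right-hand side involves $\gamma_j^{a-1}(J\underline v_l)$ rather than $\gamma_j^{a-1}\underline v_l$, and one must check
\[
\gamma_1^{a-1}(J\underline v_l)=J_0\,\gamma_1^{a-1}\underline v_l + a\,J_1\,\gamma_0^{a-1}\underline v_l = J_0\,\gamma_1^{a-1}\underline v_l + aJ_0 g\,\gamma_0^{a-1}\underline v_l,
\]
using the product rule on $J\cdot(\underline v_l/t^{a-1})$ together with $\Gamma(a+1)=a\Gamma(a)$ and the Appendix identity $J_1=J_0g$. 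It is precisely this factor $a$ in front of $J_0 g$ that matches the $ag$ in $\underline B=a(g-\underline c_\nu)$ and produces the cancellation; the Appendix alone gives you $J_1=J_0g$ but not the $a$. Once you insert this line, the terms with $g$ drop out and only $-a\underline c_\nu$ survives, as you claim. Everything else in your outline is accurate.
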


\begin{proof} 
It is shown in \cite{G18} Th.\ 4.4 for operators satisfying the
$a$-transmission condition how the formula for a
general domain $\Omega $ is deduced from the knowledge of Green's
formula in flat cases $\Omega =\rnp$, by use of
local coordinates. We shall follow that construction for our special
operators, and rather than taking up space by repeating the whole
proof, we shall just
explain the needed ingredients. 

The general transformation rule is
\eqref{eq:6.17}. We first note that when $L$ corresponds to $\underline L$,
then in view of Seeley's analysis of $a$'th powers of $\psi $do's by
passage via the resolvent and a Cauchy integral formula (recalled in
Section 3), the terms in
the symbol of  $\underline P=\underline {(L^a)}$, carried over from $P=L^a$
by the coordinate change,
are consistent with the terms in the symbol of $(\,\underline L\,)^a$.  
This will be used with $L= -\Delta +\pmb
c(x)\cdot\nabla+c_0(x)$, reduced to the form \eqref{eq:4.23} in a local
coordinate system.

The set
$\comega$ is covered by a system of bounded open sets $U_0,\dots, U_{I_1}$, with
diffeomorphisms $\kappa _i\colon U_i\to V_i\subset \rn$ such that
$U_i^+=U_i\cap\Omega $ is mapped to $V_i^+=V_i\cap\rnp$, and
$U'_i=U_i\cap \partial\Omega $ is mapped to $V'_i=V_i\cap {\mathbb
R}^{n-1}$ (${\mathbb R}^{n-1}=\partial\crnp$), the  restriction of
$\kappa _i$ to $U'_i$
denoted $\kappa '_i$.
The diffeomorphism is chosen such that the interior normal $\nu (x')$ at $x'\in\partial\Omega $ 
defines a normal coordinate near $\partial\Omega $: 
\begin{equation*}
\kappa _i \text{ maps }x'+t\nu (x')\in U_i \text{ to }(y',t)\in
V_i\subset {\mathbb R}^{n-1}\times {\mathbb R} 
\end{equation*}
near $\partial\Omega $ (with $y'= \kappa '_i(x')$).  We shall denote the inverses 
$\kappa _i^{-1}=\tilde \kappa _i$, $(\kappa '_i)^{-1}=\tilde \kappa
'_i$.

There is a partition of unity 
$\varrho _k$, $ k=0,\dots,J_0$, with ${\sum}_k\varrho _k=1$ on a
neighborhood of $\comega $, \emph{subordinate} to the covering, in the
sense that for any two functions $\varrho _k,\varrho _l$ there is an
$i=i(k,l)$ in $\{0,\dots,I_1\}$ such that $\supp \varrho _k\cup \supp
\varrho _l\subset U_{i(k,l)}$. Moreover, nonnegative functions $\psi
_k$ and $\zeta _k\in C_0^\infty (U_i)$ are chosen such that $\zeta
_k\varrho _k=\varrho _k$ and $\psi _k\zeta _k=\zeta _k$.

Now a given $u\in H^{(a-1)(s)}(\comega)$ can be decomposed in this
space as a sum
$u={\sum}_{k\le J_0}u_k+r$, where $\supp u_k\subset \supp \zeta _k\subset
U_i$, and $r\in \dot C^\infty (\comega)$ does not contribute to the
boundary integrals. There is a similar decomposition $v={\sum}_{l\le
J_0}v_l+r'$ of $v\in H^{(a-1)(s)}(\comega)$. The operators $P$ and
$P^*$ can in their action on $u_k$ and $v_l$ in the scalar products be
replaced by 
\begin{equation*}
P_{kl}=\psi _lP\psi _k,\quad P^*_{kl}= \psi _kP^* \psi _l.
\end{equation*}
As earlier noted, the action of the operators in local coordinates follows the rule
recalled in \eqref{eq:6.17}; we indicate localized operators and functions
by underlines. It is shown in Th.\ 4.4 of \cite{G18} how the
contribution from $u_k,v_l$ is reformulated and worked out as
\begin{align} \nonumber
(P_{kl}u_k,v_l)_{\Omega ^+_i}& - (u_k,P^*_{kl}v_l)_{\Omega ^+_i} 
 =(r^+\underline P_{kl}\underline
u_k,J\underline v_l)_{\rnp}-(\underline
u_k,r^+(\underline P_{kl})^{(*)}J\underline v_l)_{\rnp}\\ \label{eq:5.4}
 =(\underline s_{kl,0}&\gamma _1^{a-1}\underline u_k,\gamma
_0^{a-1}J\underline  v_l)_{{\mathbb R}^{n-1}}-(\underline s_{kl,0}\gamma _0^{a-1}\underline
                        u_k,\gamma _1^{a-1}J\underline v_l)_{{\mathbb R}^{n-1}}\\ \nonumber
 & +(\underline B_{kl}\gamma
_0^{a-1}\underline u_k,\gamma _0^{a-1}J\underline v_{l})_{{\mathbb R}^{n-1}},
\end{align}
where $J=|\partial
x/\partial(y',t)|$, the absolute value of the functional determinant
of $\tilde\kappa $ going
from the local coordinates $(y',t)$ to the
given coordinates $x$. (We omit marking the operators with 
$(i)$ as in \cite{G18} indicating the dependence on the coordinate patch.)

The effect of $J$ in the boundary values with respect to $\underline
v_l$ is as follows:
\begin{align} \nonumber
  \gamma _0^{a-1}(J\underline v_l)&=\Gamma (a)\gamma _0\bigl(
                                    \frac{J\underline v_l}{t^{a-1}}\bigr)
                                    =\gamma _0(J)\gamma _0 ^{a-1}(\underline
                                  v_l),\\ \label{eq:5.5} 
\gamma _1^{a-1}(J\underline v_l)&=\Gamma (a+1)\gamma
_0\bigl(\frac \partial{\partial t}\frac{J\underline
v_l}{t^{a-1}}\bigr)\\ \nonumber
&=\Gamma (a+1)\gamma
_0(J)\gamma _1\bigl(\frac{\underline
v_l}{t^{a-1}}\bigr)+a\Gamma (a)\gamma
_1(J)\gamma _0\bigl(\frac{\underline
v_l}{t^{a-1}}\bigr)\\ \nonumber
&=\gamma _0(J)\gamma _1 ^{a-1}(\underline v_l)+a\gamma
_1 (J)\gamma _0 ^{a-1}(\underline v_l).
\end{align}
Here we recall from the Appendix that $\gamma _0(J)=J_0$, the factor
entering in integration formulas over $\partial\Omega $, and \begin{equation}
\gamma
_1(J)=J_1=J_0g,\quad g=\operatorname{div}\nu .\label{eq:5.6}
\end{equation}
Hence
\begin{equation}
\gamma _0^{a-1}(J\underline v_l)=J_0\gamma _0^{a-1}(\underline v_l),\quad \gamma _1^{a-1}(J\underline v_l)=J_0\gamma _1 ^{a-1}(\underline v_l)+aJ_0g\gamma _0 ^{a-1}(\underline v_l).\label{eq:5.7}
\end{equation}

Now we apply Th.\ 4.1 of \cite{G18}, using the formula for the
localized version of $P=(-\Delta +\pmb c\cdot \nabla + c_0)^a$ described in Section 4.
 Since the cutoff
functions $\underline\psi _k,\underline\psi _l$ are 1 on $\supp
\underline u_k$ resp.\ $\supp \underline v_l$,
they can be disregarded in the formulas.

As shown in Corollary 4.5, $\underline B$ for $\underline P$
is the multiplication by $a(g(y')-\underline c_\nu (y'))$. 
Then \eqref{eq:5.4} takes the form, in view
of \eqref{eq:5.7},
\begin{align} \nonumber
&(r^+\underline P_{kl}\underline
u_k,J\underline v_l)_{\rnp}-(\underline
u_k,r^+(\underline P_{kl})^{(*)}(J\underline v_l))_{\rnp}\\ \nonumber
& =(\underline s_{kl,0}\gamma _1^{a-1}\underline u_k,J_0\gamma
_0^{a-1}\underline v_l)_{{\mathbb R}^{n-1}}-(\underline s_{kl,0}\gamma _0^{a-1}\underline
u_k,J_0\gamma _1^{a-1}\underline v_l+aJ_0g\gamma _0^{a-1}\underline
v_l)_{{\mathbb R}^{n-1}}\\ \label{eq:5.8}
&\quad +(a(g-\underline c_\nu )\gamma
_0^{a-1}\underline u_k,J_0\gamma _0^{a-1}\underline v_{l})_{{\mathbb
                                            R}^{n-1}}\\ \nonumber
& =(\underline s_{kl,0}\gamma _1^{a-1}\underline u_k,J_0\gamma
_0^{a-1}\underline v_l)_{{\mathbb R}^{n-1}}-(\underline s_{kl,0}\gamma _0^{a-1}\underline
u_k,J_0\gamma _1^{a-1}\underline v_l)_{{\mathbb R}^{n-1}}\\ \nonumber
&\quad -(a\underline c_\nu \gamma
_0^{a-1}\underline u_k,J_0\gamma _0^{a-1}\underline v_{l})_{{\mathbb R}^{n-1}},\end{align}
where the terms with $ag$ \emph{cancelled out!}
Here $p(y',0,0,1)=(\ell_0(y',0,0,1))^a=1$ (since the coefficient of
$\partial_{y_n}^2$ in $\ell_0$ is 1), so the factor $\underline s_{kl,0}$ is simply $\underline \psi
_k\underline \psi _l$, which is 1 on  $\supp\underline u_k\cap\supp \underline v_l$, and the last display in \eqref{eq:5.8} simplifies to
\begin{equation}
(\gamma _1^{a-1}\underline u_k,J_0\gamma
_0^{a-1}\underline v_l)_{{\mathbb R}^{n-1}}-(\gamma _0^{a-1}\underline
u_k,J_0\gamma _1^{a-1}\underline v_l)_{{\mathbb R}^{n-1}}-(a\underline c_\nu \gamma _0^{a-1}\underline
u_k,J_0\gamma _0^{a-1}\underline v_l)_{{\mathbb R}^{n-1}}
.\label{eq:5.9}
\end{equation}
Expressed in $x$-coordinates, this gives
\begin{align*}
&(\gamma _1^{a-1}u_k,\gamma
_0^{a-1}v_l)_{L_2(\partial\Omega )}
-( \gamma _0^{a-1}{u_k},\gamma _1^{a-1}v_l)_{L_2(\partial\Omega )}-( ac_\nu \gamma _0^{a-1}{u_k},\gamma _0^{a-1}v_l)_{L_2(\partial\Omega )}
\\
&\equiv \Gamma (a)\Gamma (a+1)\int_{\partial\Omega }\bigl(\gamma
_1(\tfrac {u_k}{t^{a-1}})\gamma _0(\tfrac{\bar v_l}{t^{a-1}})
 -\gamma _0(\tfrac{u_k}{t^{a-1}})\gamma
_1(\tfrac{\bar v_{l}}{t^{a-1}})  -c_\nu \gamma _0(\tfrac{u_k}{t^{a-1}})\gamma
_0(\tfrac{\bar v_{l}}{t^{a-1}})
\bigr)
\, d\sigma, \end{align*}
 and a summation over $k,l$ leads to \eqref{eq:5.1}. 

The
 validity on lower-order function spaces is accounted for in
 \cite{G18}, and the formula for $(-\Delta )^a$ is a special case
 where $c_\nu =0$. 

\end{proof}

\section {Appendix. Localization of the Laplacian}\label{sec6}

The basic arguments in \cite{G18} depend on a study of
pseudodifferential boundary value problems, reduced from the general
situation where $\partial\Omega $ is a hypersurface in ${\mathbb R}^n$ to
the situation where $\partial\Omega $ equals the boundary ${\mathbb
R}^{n-1}$ of the half-space $\rnp=\{x\in
{\mathbb R}^n\mid x_n> 0\}$ (where $(x_1,\dots, x_{n-1})=x'$). As a
preparation for  seeing how
such coordinate changes affect $(-\Delta )^a$, we here
investigate their effect on $-\Delta $ itself (the case $a=1$).

In the following, $S$ is a smooth hypersurface in ${\mathbb R}^n$ (e.g.\
a piece of $\partial\Omega $)
defined as
\begin{equation}
S=\chi (V'), \quad V'\text{ open }\subset {\mathbb R}^{n-1},\label{eq:6.1}
\end{equation}
where $\chi =(\chi _1(y'),\dots ,\chi _ n(y')) $ is a smooth injective
mapping. With $\nu (\chi (y'))$ denoting a unit normal to $S$
at each point $\chi (y')\in S$ (orthogonal to the tangent vectors
$\partial\chi /\partial y_j$, $j=1,\dots,{n-1}$, its orientation
depending continuously on $y'$), we parametrize a tubular neighborhood $U$ of $S$
by a diffeomorphism
\begin{equation}
\tilde\kappa \colon y=(y',t)\mapsto x=\chi (y')+t\nu (\chi (y')),\label{eq:6.2}
\end{equation}
from $V=V'\times\,]-\delta , \delta [\,$ to $U$ (possibly
after replacing $U$, $V$ and $S$ by smaller sets). The functional
matrix is 
\begin{equation}
\frac{\partial x}{\partial y}=\frac{\partial\tilde\kappa }{\partial
(y',t)}=
\begin{pmatrix} \dfrac{\partial\chi }{\partial y'}+t\dfrac{\partial\nu (\chi
)}{\partial y'}& \nu (\chi )\end{pmatrix}, \label{eq:6.3}
\end{equation}
written as an $n\times(n-1)$-block next to an $n\times 1$-block.
We can view this as $M+tN$, where 
\begin{equation}
M=\begin{pmatrix} \dfrac{\partial\chi }{\partial y'}& \nu (\chi
)\end{pmatrix}=\frac{\partial x}{\partial y}\Big|_{t=0},\quad N =\begin{pmatrix} \dfrac{\partial\nu (\chi
)}{\partial y'}& 0\end{pmatrix}, \label{eq:6.4}
\end{equation}

The Jacobian is the absolute value of the functional determinant
\begin{equation}
J=|\det\frac{\partial x}{\partial y}|=|\det(M+tN)|.\label{eq:6.5}
\end{equation}
To fix the ideas, assume that $\det M>0$, so that $J=\det(M+tN)$ for
small  $t$.

In comparison with the notation in \cite{G18}, we are leaving out the
indexation by $i$ in Remark 4.3 there, $\tilde\kappa $ is the
inverse of the diffeomorphism denoted $\kappa _i$ there, and $\chi $ is the inverse
of the mapping  $\kappa _i'$.

Denote $J|_{t=0}=J_0$, note that it equals $\det M$. It is well-known
that integration over $S$ can be described via the local coordinates as
follows:
When $\varphi (x)$ is a function on $S$, denote by $\underline \varphi
(y')$ the corresponding
function on $V'$, that is,
\begin{equation*}
\varphi (\chi (y'))=\underline\varphi (y'). %\label{eq:6.6}
\end{equation*}
The rule for integration is then
\begin{equation}
\int_{S}\varphi (x)\, d\sigma =\int_{V'}\underline\varphi (y')J_0\,
dy' \label{eq:6.7}
\end{equation}
(the appropriate ``area-element'' is $J_0\,dy'$). This is found in
introductory textbooks on differential geometry; note that $J_0$ can also be described by 
\begin{equation}
J_0=\sqrt{\det (M^tM)}\,=\sqrt{\det (\tfrac{\partial \chi }{\partial y_j}\cdot
\tfrac{\partial \chi }{\partial y_k})_{j,k\le n-1}}\,.\label{eq:6.8}
\end{equation}
%[Reference? Har kun Driver-noterne.] 

For Green's formula we shall moreover need the value of the
$t$-derivative of $J$ at $t=0$, that we calculate here for
completeness:

\begin{lem}\label{Lemma A.1} Assume that $\det M>0$. The value of
$J_1=\partial_tJ|_{t=0}$ at the points of $S$ is \begin{equation}
  J_1=J_0\operatorname{div}\nu .\label{eq:6.9}
\end{equation} 
\end{lem}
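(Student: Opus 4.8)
The plan is to compute $\partial_t\det(M+tN)$ at $t=0$ using Jacobi's formula for the derivative of a determinant, and then identify the resulting trace-type expression with $J_0\operatorname{div}\nu$ by a direct calculation in the tubular-neighborhood coordinates. Concretely, Jacobi's formula gives $\partial_t\det(M+tN)=\det(M+tN)\,\Tr\bigl((M+tN)^{-1}N\bigr)$, so at $t=0$ we get $J_1=\det M\cdot \Tr(M^{-1}N)=J_0\,\Tr(M^{-1}N)$. Thus everything reduces to showing $\Tr(M^{-1}N)=\operatorname{div}\nu$ along $S$.

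To evaluate $\Tr(M^{-1}N)$, I would write the columns of $M$ as $m_j=\partial\chi/\partial y_j$ for $j<n$ and $m_n=\nu(\chi)$, and the columns of $N$ as $n_j=\partial\nu(\chi)/\partial y_j$ for $j<n$ and $n_n=0$. If $(\mu^1,\dots,\mu^n)$ denote the rows of $M^{-1}$, i.e.\ the dual basis characterized by $\mu^i\cdot m_k=\delta^i_k$, then $\Tr(M^{-1}N)=\sum_{i\le n}\mu^i\cdot n_i=\sum_{j<n}\mu^j\cdot \partial_{y_j}\nu(\chi)$ (the $i=n$ term drops since $n_n=0$). On the other hand, $\operatorname{div}\nu$ evaluated at $x=\chi(y')$, being coordinate-invariant, can be computed in the $(y',t)$ coordinates: $\operatorname{div}\nu=\sum_{i\le n}(\text{coordinate $y^i$-derivative of the $y^i$-component of }\nu)$ expressed intrinsically as $\sum_i \mu^i\cdot \partial_{y_i}(\text{the vector field }\nu)$, where here $\nu$ is regarded as the constant-in-$t$ extension $\nu(\chi(y'))$ off $S$ (which is exactly the coordinate vector field $\partial_t$ of the parametrization \eqref{eq:6.2}). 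Since $\partial_t\nu=0$ for this extension, only the $j<n$ terms survive, and we again obtain $\sum_{j<n}\mu^j\cdot\partial_{y_j}\nu(\chi)$. Matching the two expressions yields $\Tr(M^{-1}N)=\operatorname{div}\nu$, hence \eqref{eq:6.9}.

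The step I expect to be the main obstacle, or at least the one requiring care, is making precise the identification of $\Tr(M^{-1}N)$ with $\operatorname{div}\nu$ at $t=0$: one must be careful that $\operatorname{div}\nu$ means the Euclidean divergence in ${\mathbb R}^n$ of the unit vector field $\nu$ extended off $S$ (and the natural extension, constant along normal lines, is precisely the one for which $\partial_t\nu=0$), and that the divergence of a vector field $X$ in curvilinear coordinates $y$ with Jacobian matrix $\partial x/\partial y$ is $\Tr\bigl((\partial x/\partial y)^{-1}\,\partial_y(\text{components of }X\text{ in the }y\text{-frame})\bigr)$ only after accounting correctly for how $X$ is represented — this is cleanest if one simply observes that the full Jacobian $\partial x/\partial y=M+tN$, that $\partial_t$ of its determinant is the quantity $J_1$ in question, and that the divergence theorem (or the standard formula $\operatorname{div}X = \frac1{J}\partial_t(J\cdot 1)$ for the unit normal field whose flow is exactly the $t$-translation) gives $\operatorname{div}\nu = J^{-1}\partial_t J$, which at $t=0$ is precisely $J_1/J_0$. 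This last route is in fact the shortest: the flow of the extended field $\nu$ is the map $(y',t)\mapsto x(y',t+s)$, its Jacobian at time $s$ is $J(y',t+s)$, and differentiating the change-of-variables/transport identity at $s=0$ gives $\operatorname{div}\nu = (\partial_t J)/J$, so at $t=0$ we read off $J_1=J_0\operatorname{div}\nu$.
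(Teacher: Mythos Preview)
Your proposal is correct. The opening step via Jacobi's formula is equivalent to the paper's: the paper expands $\det(M+tN)=t^n\det M\det(sI+NM^{-1})$ with $s=1/t$ and reads off the coefficient of $t$ as $\det M\cdot\Tr(NM^{-1})$, which is your $J_0\,\Tr(M^{-1}N)$ by cyclicity of the trace. Where the two arguments diverge is in identifying this trace with $\operatorname{div}\nu$. The paper does this in one line by the chain rule: since the extended $\nu$ is constant in $t$, the matrix $N=\bigl(\partial\nu(\chi)/\partial y'\;\;0\bigr)$ is exactly $\partial\nu/\partial y$, and $M^{-1}=\partial y/\partial x|_{t=0}$, so $NM^{-1}=\partial\nu/\partial x$ and its trace is $\sum_j\partial_{x_j}\nu_j=\operatorname{div}\nu$ on the nose. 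Your dual-basis/curvilinear-divergence paragraph is aiming at the same fact but is considerably more circuitous; the chain-rule observation would tighten it. Your second route, the flow formula $\operatorname{div}\nu=J^{-1}\partial_tJ$ (Liouville's identity for the $t$-translation flow in the tubular coordinates), is a genuinely different and self-contained argument that bypasses the trace computation altogether; it is clean and worth keeping as an alternative, though it trades an elementary matrix identity for a slightly higher-level fact about flows.
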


\begin{proof} Fix $x\in S$. Since $J(t)=\det (M+tN)$ is a polynomial of
degree $n$ in $t$, $J_1$ is the coefficient of the first  power $t$. Now
with $s=1/t$, \begin{align*}
\det(M+tN)&=t^n\det M \det (s+NM^{-1})\\
&=t^n\det M
(s^n+\operatorname{trace}(NM^{-1}) s^{n-1}+\dots +\det(NM^{-1})),
\end{align*}
so the coefficient of $t$ is $\det M\operatorname{trace}(NM^{-1})$.
Here
\begin{equation*}
NM^{-1}=\begin{pmatrix}  \dfrac{\partial\nu (\chi
)}{\partial y'}& 0\end{pmatrix} \frac{\partial y}{\partial x}= \frac{\partial\nu (\chi
)}{\partial y} \frac{\partial y}{\partial x}= \frac{\partial\nu (\chi
)}{\partial x} .
\end{equation*}
 The trace of this matrix equals
\begin{equation}
\operatorname{trace}(NM^{-1})=\partial_{x_1}\nu _1+\dots +\partial_{x_n}\nu _n=\operatorname{div}\nu .\label{eq:6.10}
\end{equation}
Since $\det M=J_0$, it follows that $\det M
\text{trace}(NM^{-1})=J_0\operatorname{div}\nu $.
\end{proof}

The function $\operatorname{div}\nu $ on $S$ represents the
mean curvature, modulo a dimensional factor.

It is known that $\Delta u$ on $S$ has the form
\begin{equation}
\Delta u=\Delta _Su+\mathcal G \frac {\partial u}{\partial n}+\frac
{\partial^2 u}{\partial n^2} \quad\text{ on }S,\label{eq:6.11}
\end{equation}
cf.\ e.g.\ Hsiao and Wendtland \cite{HW08} (with reference to Leis
1967) and Duduchava, Mitrea and Mitrea \cite{DMM06} (with reference to
G\"unther 1934). Here $\Delta _S$ is the Laplace-Beltrami operator on
$S$, $\partial u/\partial n$ is the normal derivative $\partial
u/\partial n={\sum}_{j=1}^n\nu _j\partial_{x_j}u$, and $\mathcal
G=\operatorname{div}\nu 
$. In our local
coordinates, $\partial /\partial n$ corresponds to $\partial/\partial
t$, and $\Delta _S$ corresponds to an operator $\underline\Delta '(y',\partial_{y'},0)$
acting with respect to $y'$ (we do not need its exact form at present).

For $|\varepsilon |<\delta $, the parallel surfaces $S_\varepsilon $
represented by \begin{equation*}
\chi _\varepsilon (y')=\chi (y')+\varepsilon \nu (\chi (y')), \quad
y'\in V',%\label{eq:6.12}
\end{equation*}
again have normals $ \nu (\chi (y'))$. Indeed, if we
denote $\nu (\chi (y'))=\tilde\nu (y')$, we have since
$\|\tilde\nu (y')\|=1$ for $y'\in V'$, that the vectors $\partial\tilde\nu /\partial
y_j$ are orthogonal to $\tilde\nu $ at the point, hence $\tilde\nu $ is orthogonal to $\partial (\chi
+\varepsilon \tilde\nu ) /\partial y_j$ at the point, for $j=1,\dots n-1$. So \eqref{eq:6.2} is
also a parametrization of a neighborhood of $S_\varepsilon $ (for $t$
near $\varepsilon $), with $\nu (\chi (y'))$ as normal at the point
$\chi (y')+\varepsilon \nu (\chi (y'))$. On $S_\varepsilon $  there is
a formula like \eqref{eq:6.11},
\begin{equation*}
\Delta u =\Delta _{S_\varepsilon }u+\mathcal G_\varepsilon \frac{\partial
u}{\partial n} +\frac{\partial ^2
u}{\partial n^2}\quad\text{ on }S_\varepsilon, %\label{eq:6.13} 
\end{equation*}
where $\mathcal G_\varepsilon $ at $\chi (y')+\varepsilon \tilde\nu
(y')$ is the same as $\mathcal G$ at $\chi (y')$, and $\partial/\partial
n$ corresponds to $\partial/\partial t$.

We conclude that in the local coordinates $(y',t)$, when
$u(x)$
corresponds to $\underline u(y)$ (i.e., $u(\tilde\kappa
(y',t))=\underline u(y',t)$, $\Delta u$ takes
the form 
\begin{equation}
\Delta u=\underline\Delta\, \underline u\equiv \Delta '(y',t,\partial_{y'})\underline u+g(y')\partial_t\underline u
+\partial_t^2\underline u,\text{ for
}(y',t)\in V,\label{eq:6.14}
\end{equation}
 where $\Delta '$ is a second-order operator differentiating only in the
 $y'$-variables, and 
\begin{equation*}
g(y')=\mathcal G(\chi (y')).
\end{equation*}

\medskip
We can now compare Green's formulas worked out in the different
coordinates.

When $S$ is a piece of the boundary $\partial\Omega $ of a smooth open
set $\Omega \subset {\mathbb R}^n$ with $\nu $ as the interior normal,
and $u$ and $v$ are supported in $U$, then, as is very well known,
\begin{equation}
(-\Delta u,v)_{U^+}-(u,-\Delta v)_{U^+}=(\gamma _1u,\gamma _0v)_{S}-(\gamma _0u,\gamma _1v)_{S},\label{eq:6.15}
\end{equation}
here $U^+=U\cap \Omega $, $\gamma _0u=u|_{\partial\Omega }$, and
$\gamma _1u=\gamma _0(\partial u/\partial n)$.

For the operator in \eqref{eq:6.14} we have for $\underline u$ and $\underline
w$ supported in $V$, denoting $V\cap \rnp=V^+$,
\begin{align} \label{eq:6.16}
(-\underline\Delta\, \underline
u,\underline w)_{V^+}&-(\underline u,-\underline\Delta
^{(*)}\underline w)_{V^+}\\ \nonumber
&=(-\Delta '\underline u-g\partial_t\underline u
-\partial_t^2\underline u,\underline w)_{V^+}-(\underline u,(-\Delta
')^{(*)}\underline w+\partial_t(g\underline w)
-\partial_t^2\underline  w)_{V^+}\\ \nonumber
&=(\gamma _1\underline u,\gamma
_0\underline w)_{V'}-(\gamma _0\underline u,\gamma
_1\underline w)_{V'}+(g\gamma _0\underline u,\gamma
_0\underline w)_{V'};
\end{align}
here the star in parentheses indicates the adjoint with respect to
$y$-coordinates, to distinguish it from the adjoint in
$x$-coordinates,
and $\gamma _1\underline w=\gamma _0(\partial \underline w/\partial
t)$ (consistently with the normal derivative).

It may seem surprising at first, that the two formulas \eqref{eq:6.15}
and \eqref{eq:6.16} are so
different, in that the latter 
has the extra term with
$g$. However, they are consistent, as we shall now show by deducing
\eqref{eq:6.15} from \eqref{eq:6.16}.

Recall, as also accounted for in \cite{G18}, that when the operator $P$ in
$x$-coordinates corresponds to $\underline P$ in $y$-coordinates:
\begin{equation}
\underline P\, \underline u=P(\underline u\circ \tilde\kappa
^{-1})\circ \tilde\kappa =(Pu)\circ \tilde\kappa ,\label{eq:6.17}
\end{equation}
with $\underline u=u\circ \tilde\kappa $, then 
\begin{equation*}
(Pu,v)_{U}=(\underline{Pu}, J\underline v)_{V},%\label{eq:6.18}
\end{equation*}
and the formal adjoint of $\underline P$ in $y$-coordinates satisfies 
\begin{equation*}
(\underline P)^{(*)}=J\underline{(P^*)}J^{-1}. %\label{eq:6.19}
\end{equation*}
Thus (for sufficiently smooth $u,v$ supported in $U$)
\begin{align*}
(-\Delta u,v)_{U^+}&-(u,-\Delta v)_{U^+}=(-\underline\Delta\, \underline
u,J\underline v)_{V^+}-(\underline u,-\underline\Delta
^{(*)}J\underline v)_{V^+}\\
&=(\gamma _1\underline u,\gamma
_0(J\underline v))_{V'}-(\gamma _0\underline u,\gamma
_1(J\underline v))_{V'}+(g\gamma _0\underline u,\gamma
_0(J\underline v))_{V'},
\end{align*}
 by \eqref{eq:6.16}.
Here 
\begin{equation}
\gamma _0(J\underline v)=J_0\gamma _0\underline v,\quad 
\gamma _1(J\underline v)=J_0\gamma _1\underline v+J_1\gamma
_0\underline v, \label{eq:6.20}
\end{equation}
where $J_0=\gamma _0J$. $J_1=\gamma _0(\partial_tJ)$ as defined above.
In view of Lemma 2.1, $J_1=J_0g$. As a result,
\begin{align*}
(-\Delta u,v)_{U^+}&-(u,-\Delta v)_{U^+}\\
&=(\gamma _1\underline u,J_0\gamma
_0\underline v)_{V'}-(\gamma _0\underline u,J_0\gamma
_1\underline v+J_0g\gamma _0\underline v)_{V'}+(g\gamma _0\underline u,J_0\gamma
_0\underline v)_{V'}\\
&=(\gamma _1\underline u,J_0\gamma
_0\underline v)_{V'}-(\gamma _0\underline u,J_0\gamma
_1\underline v)_{V'}\\
&=(\gamma _1u,\gamma
_0v)_{S}-(\gamma _0 u,\gamma
_1 v)_{S},
\end{align*}
where the terms with $g$ \emph {cancelled out}. Thus \eqref{eq:6.15} follows from \eqref{eq:6.16}. In the last step we used \eqref{eq:6.7}.

\begin{rem}\label{Corrections} \textbf{Corrections to a preceding paper.}
A few misprints in the
paper \cite{G18}, that were not eliminated during the typesetting, are listed here:  
Page 752, line 24, ``derived from $P^*$'' should be ``derived from
$P^+$''. Page 756, line 9, ``for $p>-1/\mu $'' should be ``for
$p<-1/\mu $''. Page 757, line 8 from below, ``$x_n^a\partial_{n_{\acute
w}}^e(\xi ',0)$'' should be ``$x_n^a\partial_n{\acute w}_e(\xi
',0)$''. Page 762, line 8,  ``$aB$'' should be ``$a^{-1}B$''.
Page 768, line 3, ``Let $P$ is'' should be ``Let $P$
be''; line 13, ``$aB$'' should be ``$a^{-1}B$''. Page 769, line 8 from below, replace ``$\underline uk$'' by ``$\underline u_k$''. 
\end{rem}
 
%\Refs
%\widestnumber\key{[RSV17]}

\end{document}